\documentclass{amsart}
\usepackage{mathrsfs}
\usepackage{amssymb}
\usepackage{graphicx} 
\usepackage{geometry}
\usepackage{enumerate}

\usepackage{xcolor}

\geometry{margin=3.5 cm}

\title{Mean field equations arising from \\ random vortex dynamics}
\newcommand{\R}{\mathbb{R}}
\newcommand{\E}{\mathbb{E}}
\newcommand{\rd}{\mathrm{d}}
\newcommand{\K}{\mathcal{K}}
\newcommand{\tb}{\tilde{b}}

\newtheorem{lemma}{Lemma}
\newtheorem{proposition}{Proposition}
\newtheorem{theorem}{Theorem}
\newtheorem{remark}{Remark}

\begin{document}

 \author{Jiawei Li}
 \address{School of Mathematics, University of Edinburgh, Edinburgh, United Kingdom, EH9 3FD.}
 \email{jiawei.li@ed.ac.uk}

\author{Zhongmin Qian}
\address{Mathematical Institute, University of Oxford, Oxford, United Kingdom, OX2 6GG.}
\email{zhongmin.qian@maths.ox.ac.uk}



\begin{abstract}
We consider Mckean-Vlasov type stochastic differential equations with multiplicative noise arising from the random vortex method. Such an equation can be viewed as the mean-field limit of interacting particle systems with singular interacting kernels such as the Biot-Savart kernel. A new estimate for the transition probability density of diffusion processes will be formulated to handle the singularity of the interacting kernel. The existence and uniqueness of the weak solution of such SDEs will be established as the main result.  
\end{abstract}

\maketitle

\section{Introduction}
In this paper, we aim to establish the existence and uniqueness of the weak solution of the $\R^d$-valued stochastic differential equation  
\begin{equation}
\label{stochastic-vortex-SDE}
    \rd X_t = u(t,X_t) \rd t + \sigma(t,X_t)\rd B_t,
\end{equation}
where 
\begin{equation*}
    u(t,x) = \int_{\R^d} \E[K(x-X_t)|X_0=y]\cdot w(y)\rd y.
\end{equation*}
Here, $B$ is a $d$-dimensional Brownian motion and $\sigma^T\sigma$ is uniformly elliptic and bounded. $K$ is a $d\times d$-matrix-valued Borel measurable function such that $|K(x)|\lesssim |x|^{-\gamma}$ with $\gamma\in[0,d)$. Moreover, $w:\R^d\to \R^d$ is some given function that is bounded and integrable. The drift term in this stochastic system depends on the law of its solution, and such an equation was first studied by Mckean \cite{Mckean-1966} and known as the Mckean-Vlasov stochastic differential equations.  

This system of stochastic differential equations arises from the random vortex method in fluid dynamics. When $\sigma$ is a positive constant and $d=2$, it is well-known that the dynamics of \eqref{stochastic-vortex-SDE} is associated with the 2D vorticity equations for incompressible fluid flows with viscosity $\sigma>0$
\begin{equation}
\label{vorticity-equation}
    \frac{\partial W}{\partial t} + u\cdot \nabla W = \sigma\Delta w, 
\end{equation}
where $W=\nabla \wedge u$ is the vorticity of the flow, whose initial data $W(0,x) = w(x)$ is given. The velocity can be given by the Biot-Savart law
\[
u(t,x) = \int_{\R^2} K(x-y)\cdot W(t,y)\rd y
\]
solving the incompressible Navier-Stokes equation, and $K$ is the singular kernel 
\begin{equation}
\label{Biot-Savart-kernel}
K(x) = \left(\frac{\partial G}{\partial x_2},-\frac{\partial G}{\partial x_1}\right),
\end{equation}
where $G(x) = -\frac{\ln |x|}{2\pi}$ is the fundamental solution of the Poisson equation. 
In \cite{Chorin-1973}, Chorin introduced the random vortex method by splitting the 2D Navier-Stokes equation into an Euler's equation and a heat equation, where the latter can be simulated using random walks. The rate of convergence of the method was studied in Beale and Majda \cite{Beale-Majda-1981}, further improved in Goodman \cite{Goodman-1987} and Long \cite{Long-1988}. 

Meanwhile, the vorticity $W$ that solves equation \eqref{vorticity-equation} can be obtained as the mean-field limit via propagation of chaos for the interacting particle system of $N$-vortices
\begin{equation}
\label{N-vortices system}
    \rd X^{N,i}_t = \frac{1}{N} \sum_{j\neq i} w^{N,j} K(X^{N,i}_t-X^{N,j}_t)\rd t +\sigma \rd B^i_t,\quad \forall i=1,2,\cdots,N,
\end{equation}
where $w^{N,i}\in \R$ represents the intensity of the vortices, and $B^i$'s, $i=1,2,\cdots,N$, are independent two-dimensional standard Brownian motions. In \cite{Marchioro-Pulvirenti-1982}, Marchioro and Pulvirenti (see also \cite{Meleard-2000}) approximated the vorticity equation by the regularised $N$-vortices system and proved the propagation of chaos for incompressible viscous two-dimensional fluids with bounded integrable initial vorticity. Without regularising the interaction kernel, Osada \cite{Osada-1985} used the transition probability density associated with the generators of generalised divergence form and proved that the system \eqref{N-vortices system} defines a conservative diffusion, which then implied the well-posedness of \eqref{N-vortices system}. He also proved the propagation of chaos for the two-dimensional incompressible Navier-stokes equation when the viscosity is large in \cite{Osada-1986} and when the viscosity is small in \cite{Osada-1987}. More recently, Fournier, Hauray and Mischler proved a stronger propagation of chaos of trajectories in \cite{Fournier-Hauray-Mischler-2014}. Indeed, the propagation of chaos for interacting particle systems has received a lot of attention ever since it was first introduced by Mckean \cite{Mckean-1966,Mckean-1967}. See for example \cite{Sznitman-1991,Jabin-2014,Jabin-Wang-2017} for detailed reviews. 

In \cite{Qian-Yao-2022}, the authors considered the limiting equation of \eqref{N-vortices system}
was considered and proved the existence and uniqueness of weak and strong solutions of the equation. In this paper, instead of additive noise, we will study such Mckean-Vlasov type stochastic differential equations but with multiplicative noise, which can be viewed as the mean-field limit of the stochastic vortex system 
\begin{equation*}
     \rd X^{N,i}_t = \frac{1}{N} \sum_{j\neq i} w^{N,j} K(X^{N,i}_t-X^{N,j}_t)\rd t +\sigma(t,X^{N-i}_t) \rd B^i_t,\quad \forall i=1,2,\cdots,N,
\end{equation*}
which are more realistic vortex models as the noise now depends on the relative positions of the vortices. It is worth mentioning that Euler equations with multiplicative noise in the Stratonovich form were also studied in the work \cite{Flandoli-Gubinelli-Priola-2011} by Flandoli, Gubinelli and Priola, where they handled the stochastic vortex system with a finite number of vortices. The difficulty lies in the singularity of the interacting kernel, and to deal with it, we will establish a new estimate for transition probability densities of diffusion in Section 3. Then we will use this estimate and fixed point theorem to conclude the existence and uniqueness results as desired in Section 4. Before that, let us introduce some handy notations and a couple of useful known results in the next Section.

\section{Preliminaries}
In this section, we will introduce our notations and assumptions on the equation coefficients. Then we will present a couple of useful results for the proofs in sections 3 and 4. 

Let $X^b = \{X^b_t\}_{t\geq 0}$ be a diffusion process in $\R^d$ that satisfies the following stochastic differential equation with a measurable and bounded drift $b$:
\begin{equation}
\label{SDE-b-drift}
    \rd X^{b}_t = b(t,X^{b}_t)\rd t + \sigma(t,X^{b}_t)\rd B_t,  
\end{equation}
where $\{B_t\}_{t\geq 0}$ is a $d$-dimensional standard Brownian motion on some probability space $(\Omega,\mathscr{F},\mathbb{P})$. Let us denote the process solves the equation with zero drift by$X^0=\{X^0_t\}_{t\geq 0}$ 
\begin{equation}
\label{SDE-no-drift}
    \rd X^0_t = \sigma(t,X^0_t)\rd B_t.
\end{equation}
We shall use $p_b(s,x,t,y)$ to denote the transition probability density function of the process $\{X^b\}$, i.e. for any Borel measurable set $A\subset \R^d$, $\forall x\in \R^d$, $0\leq s<t$,
\begin{equation*}
    \mathbb{P}(X^b_t\in A|X^b_s=x)= \int_A p_b(s,x,t,y)\rd y.
\end{equation*}
These transition probability densities are known to be continuous in space and time. When $s=0$, we shall simplify our notation and write $p_b(x,t,y)$ for the transition probability density. Let us denote the transition probability of $\{X^0_t\}$ by $p$, i.e. $p(s,x,t,y)=p_0(s,x,t,y)$. Lastly, we use $\mathbb{P}^x$ to denote the conditional probability such that 
\[
\mathbb{P}^x(X^b_t\in A) = \mathbb{P}(X^b_t\in A|X^b_0=x) 
\]
for all Borel measurable $A\subset\R^d$.

Set $g(t,x) = \sigma^T(t,x)\sigma(t,x)$. Throughout the paper, we assume that there exists some constant $\xi>0$ such that for all $i,j=1,2,\cdots,d$, 
\begin{equation*}
    \frac{1}{\xi}\leq |g^{ij}(t,x)|\leq \xi,\qquad \forall t\geq 0 , \forall x\in \R^d,
\end{equation*} 
and $g$ has bounded derivatives. We note here that under the assumption, the symmetric matrix $(g^{ij})_{1\leq i,j\leq d}$ is positive definite. We use the lower index to denote the inverse, i.e. $(g_{ij})_{1\leq i,j\leq d}$ represents the inverse matrix of $(g^{ij})_{1\leq i,j\leq d}$, and we use $\langle \cdot, \cdot \rangle_g$ to denote the inner product with respect to $(g_{ij})$, i.e. for two $d$-dimensional vectors $a$ and $b$, 
\[
\langle a,b\rangle_g = \sum_{1\leq i,j\leq d} g_{ij} a^i b^j,
\]
and $|a|_g^2 = \langle a,a\rangle_g$. The gradient operator $\nabla^g_x$ is defined by 
\[
(\nabla^g_x f(x))^i = \sum_j g^{ij}(x) \frac{\partial f}{\partial x_j},\quad i=1,2,\cdots,d
\] 
for all $f\in C^1(\R^d)$.

Let us consider the integral kernel $K$ on $\R^d$ such that
\[
|K(x)|\leq \frac{\alpha}{|x|^\gamma}, \quad \forall x\neq 0\in \R^d,
\]
where $\alpha >0$ is some constant and $\gamma\in[0,d)$. Let $\mathcal{B}$ denote the complete metric space of all bounded and measurable functions on $\R_+\times \R^d$, equipped with $\lVert \cdot \rVert_\infty$ and for each $y\in\R^d$. We use $\mathcal{B}([0,T]\times \R^d)$ to denote the space of all bounded and measurable functions on $[0,T]\times \R^d$, and for each constant $L>0$, we set 
\[
\mathcal{B}_L([0,T]\times\R^d) = \{b\in \mathcal{B}([0,T]\times\R^d): \lVert b\rVert_\infty \leq L\}.
\]
For each $b\in \mathcal{B}$, we define an operator $\K$ on $\mathcal{B}$ by 
\begin{equation*}
    \K(b)(t,x) = \int_{\R^d} \E^y\left[K(x-X^b_t)\right]\cdot w(y)\rd y = \int_{\R^d} \E[K(x-X^b_t)|X^b_0=y]\cdot w(y)\rd y,
\end{equation*}
where $\E^y$ represents the expectation under the conditional measure $\mathbb{P}^y$, $w\in L^1\cap L^\infty(\R^d)$ and $\{X^b_t\}_{t\geq 0}$ is the diffusion process that satisfies \eqref{SDE-b-drift}.

Our goal is to show that the Mckean-Vlasov equation 
\begin{equation}
    \label{Mckean-Vlasov-equation}
    \rd X_t = u(t,X_t) \rd t +  \sigma(t,X_t)\rd B_t,
\end{equation}
where 
\[
u(t,x)= \int_{\R^d} \E^y\left[K(x-X_t)\right]\cdot w(y)\rd y,
\]
has a unique weak solution up to some fixed time. To this end, we will need the following results. 

\begin{proposition}[Theorem 2.4, \cite{Qian-Zheng-2004}]
\label{Qian-Zheng-proposition}
Under the above notations, we have that for every $b\in \mathcal{B}$, the transition probability density functions $p_b$ and $p$ satisfy 
\begin{equation}
    \label{trans-prob-formula}
    p_b(x,t,y) = p(x,t,y) + \int^t _0 \E^x\left[U^b_s\langle b(s,X^0_s),\nabla_x^g p(X^0_s,t-s,y)\rangle_g\right]\rd s,
\end{equation}
where
\begin{equation}
\label{radon-nikodym-deriv}
    U^b_t = \exp\left(\int^t_0 \langle b(s,X^0_s), \rd M_s\rangle_g -\frac{1}{2}\int^t_0 \left\vert b(s,X^0_s) \right\vert^2_g \rd s\right),\quad \forall t\geq 0,
\end{equation}
and $M$ is the martingale part of $X^0$ such that 
\begin{equation*}
    \langle M^i, M^j\rangle_t = \int^t_0 g^{ij}(s,X^0_s)\rd s.
\end{equation*}

\end{proposition}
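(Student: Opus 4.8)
The plan is to derive \eqref{trans-prob-formula} by combining Girsanov's transform with an application of It\^o's formula to the process $s\mapsto U^b_s\,p(X^0_s,t-s,y)$. First I would record the two ingredients that make this work. Since $b$ is bounded and $g$ is uniformly elliptic, Novikov's condition holds, so $\E^x[U^b_T]=1$ for every $T\le t$, and by Girsanov's theorem the measure $U^b_t\,\rd\mathbb{P}^x$ turns $X^0$ into a diffusion with drift $b$; consequently $\E^x[U^b_T f(X^0_T)]=\E^x[f(X^b_T)]$ for every bounded measurable $f$ and every fixed $T\le t$. Secondly, under the standing hypotheses on $g$ (uniform ellipticity, bounded derivatives) the transition density of $X^0$ satisfies Aronson-type two-sided Gaussian bounds together with the gradient bound $|\nabla_x p(z,r,y)|\lesssim r^{-1/2}r^{-d/2}\exp(-c|z-y|^2/r)$, and, viewed as a function of $(s,z)$, $v(s,z):=p(z,t-s,y)$ solves the backward Kolmogorov equation $\partial_s v+L^0_s v=0$ on $s<t$ with $v(s,\cdot)\to\delta_y$ as $s\uparrow t$, where $L^0_s=\tfrac12\sum_{ij}g^{ij}(s,\cdot)\partial_i\partial_j$ is the generator of $X^0$ (for time-dependent $g$ the object written $p(X^0_s,t-s,y)$ should be read as the two-parameter density $p(s,X^0_s,t,y)$, and the backward equation and Gaussian bounds are applied in that form).

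Next I would fix $\varepsilon\in(0,t)$ and apply It\^o's product rule to $s\mapsto U^b_s\,v(s,X^0_s)$ on $[0,t-\varepsilon]$. The term $v(s,X^0_s)\,\rd U^b_s$ is a local-martingale differential because $U^b$ is a stochastic exponential; the term $U^b_s\,\rd v(s,X^0_s)$ reduces, after the backward equation cancels the $\rd s$-part, to $U^b_s\sum_i\partial_i v(s,X^0_s)\,\rd M^i_s$, again a local-martingale differential; and the cross-variation term, using $\rd\langle M^i,M^j\rangle_s=g^{ij}(s,X^0_s)\,\rd s$ together with the algebraic identity $\langle b,\nabla^g v\rangle_g=\sum_i b^i\,\partial_i v$, equals exactly $U^b_s\,\langle b(s,X^0_s),\nabla_x^g v(s,X^0_s)\rangle_g\,\rd s$. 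Taking $\E^x$ and using that on the compact time interval $[0,t-\varepsilon]$ both $v$ and $\nabla v$ are bounded (Gaussian bounds with $r\ge\varepsilon$) while $U^b$ is in $L^2$, the local martingales are genuine martingales, so
\begin{equation*}
\E^x\bigl[U^b_{t-\varepsilon}\,p(X^0_{t-\varepsilon},\varepsilon,y)\bigr]-p(x,t,y)=\int_0^{t-\varepsilon}\E^x\bigl[U^b_s\,\langle b(s,X^0_s),\nabla_x^g p(X^0_s,t-s,y)\rangle_g\bigr]\,\rd s.
\end{equation*}

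It then remains to let $\varepsilon\downarrow0$. On the left, Girsanov (applied with the fixed time $t-\varepsilon$ and the fixed function $p(\cdot,\varepsilon,y)$) gives $\E^x[U^b_{t-\varepsilon}\,p(X^0_{t-\varepsilon},\varepsilon,y)]=\int_{\R^d}p_b(x,t-\varepsilon,z)\,p(z,\varepsilon,y)\,\rd z$; since $p(\cdot,\varepsilon,y)$ is an approximate identity (by the Gaussian bounds) and $p_b$ is jointly continuous, this converges to $p_b(x,t,y)$. On the right, I would produce an integrable majorant near $s=t$ by combining $\sup_{s\le t}\E^x[(U^b_s)^q]<\infty$ (valid for all $q$, since $b$ is bounded) with the gradient bound on $p$ and the Gaussian bound on the law of $X^0_s$, via H\"older's inequality; this gives a bound of order $(t-s)^{-1/2}$, which is integrable, so dominated convergence extends the integral to $[0,t]$ and yields \eqref{trans-prob-formula}. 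The hard part will be precisely this last step: $\nabla_x p$ is genuinely singular as $s\uparrow t$, and one must control it well enough both to conclude that the stochastic-integral contributions have vanishing expectation up to time $t$ and to pass to the limit in the drift integral — everything hinges on the sharpness of the Gaussian-type estimates for $p$ and $\nabla_x p$, which is exactly where the regularity assumptions on $g$ enter.
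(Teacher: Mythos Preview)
The paper does not give its own proof of this proposition: it is quoted verbatim as Theorem~2.4 of \cite{Qian-Zheng-2004} and used as a black box. So there is nothing in the present paper to compare your argument against.

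On its own merits your outline is sound and is essentially the standard derivation of \eqref{trans-prob-formula}. The It\^o computation is correct: with $v(s,z)=p(s,z,t,y)$ one has $\partial_s v+L^0_s v=0$, $\rd U^b_s=U^b_s\langle b,\rd M\rangle_g$, and the covariation term indeed collapses to $U^b_s\sum_i b^i\partial_i v\,\rd s=U^b_s\langle b,\nabla^g_x v\rangle_g\,\rd s$ via $g_{ij}g^{jk}=\delta_i^k$. The passage to the limit $\varepsilon\downarrow0$ is also handled correctly; your integrable majorant is exactly what the present paper produces in the proof of its Theorem~1 (Section~3), where the combination of Proposition~\ref{Stroock-result}, the Aronson bound, and the $L^p$ estimate on $U^b$ yields an integrand of order $(t-s)^{d/(2q)-d/2-1/2}$, integrable for $1<q<d/(d-1)$. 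Your phrasing ``of order $(t-s)^{-1/2}$'' is the $q\downarrow1$ endpoint and slightly informal, but the argument goes through for any $q$ in that range.

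One minor caveat worth flagging: the paper writes $g=\sigma^T\sigma$, whereas the quadratic covariation of the martingale part of $X^0$ is governed by $\sigma\sigma^T$; your identification $\langle M^i,M^j\rangle=\int g^{ij}\,\rd s$ and $L^0_s=\tfrac12\sum g^{ij}\partial_i\partial_j$ implicitly uses the latter. This is a notational inconsistency in the paper rather than in your argument, but if you intend to write the proof out in full you should use $a:=\sigma\sigma^T$ for the diffusion matrix.
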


The second result we need is an estimate of the derivatives of the transition probability densities. We shall state the result in the form that applies to our case.

\begin{proposition}[Theorem 3.3.11, \cite{Stroock-2008}]
\label{Stroock-result}
  Let $p$ be the transition probability density function associated with the diffusion \eqref{SDE-no-drift}. Then there exists some constant $A$, depending on $\xi$ and second order derivative of $g$ such that 
\begin{equation*}
    \left\vert \frac{\partial p}{\partial x_j}(x,t,y)\right\vert \leq \frac{A}{1\wedge t^\frac{1+d}{2}}\exp\left(-\left(At-\frac{|y-x|^2}{At}\right)^-\right)
\end{equation*}
for all $t>0$, $x,y\in\R^d$ and $j=1,2,\cdots,d$.
\end{proposition}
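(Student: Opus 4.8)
The plan is to deduce the pointwise gradient bound from two classical facts about the diffusion \eqref{SDE-no-drift}: an Aronson-type Gaussian upper bound for $p$, and an interior gradient estimate for uniformly parabolic equations, linked by a scaling argument performed at the parabolic scale $\sqrt{t}$. It is convenient to first reduce to a cleaner statement. The right-hand side of the claimed inequality is weakly increasing in $A$, so it suffices to prove that for $t\in(0,1]$
\begin{equation*}
\left\vert \frac{\partial p}{\partial x_j}(x,t,y)\right\vert \le \frac{C}{t^{(1+d)/2}}\exp\!\left(-\frac{|y-x|^2}{Ct}\right),
\end{equation*}
with $C$ depending only on $\xi$ and on bounds for the derivatives of $g$, together with the analogous bound for $t\ge1$ with $t^{(1+d)/2}$ replaced by $1$; choosing $A\ge C$ one checks by an elementary computation that these imply the inequality of Proposition~\ref{Stroock-result}, the factor $e^{At}$ produced by the negative-part truncation being precisely what compensates the gap between the decay rates $1/(Ct)$ and $1/(At)$.

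The first ingredient, for $t\in(0,1]$, is the Aronson upper bound $p(s,x',t,y)\le C_1 (t-s)^{-d/2}\exp\!\big(-|x'-y|^2/(C_1(t-s))\big)$ for $0<t-s\le1$, with $C_1=C_1(\xi)$; under the ellipticity and boundedness hypotheses on $g$ this is standard (via Nash's inequality and Moser iteration, or the parametrix construction), and being an input to Proposition~\ref{Stroock-result} rather than its heart I would take it as already established in the chapter of \cite{Stroock-2008} cited there. The second ingredient is the interior gradient estimate for parabolic equations: fixing $(t,y)$, the function $q(s,x):=p(s,x,t,y)$ solves the backward Kolmogorov equation $\partial_s q+\tfrac12\sum_{i,j}g^{ij}(s,\cdot)\,\partial^2_{x_ix_j}q=0$ on $\{s<t\}$, which after reversing time is uniformly parabolic with coefficients that are uniformly elliptic and have bounded derivatives; hence, since $(0,x)$ sits at the closed end of the backward parabolic cylinder $\{|x'-x|<\rho,\ 0\le s<\rho^2\}$, interior parabolic estimates (Schauder, or the $L^q$ theory together with Sobolev embedding) yield
\begin{equation*}
\left\vert\frac{\partial p}{\partial x_j}(x,t,y)\right\vert=\left\vert\frac{\partial q}{\partial x_j}(0,x)\right\vert\le \frac{C_2}{\rho}\,\sup\big\{p(s,x',t,y):|x'-x|<\rho,\ 0\le s<\rho^2\big\}
\end{equation*}
whenever $0<\rho^2\le t/4$, with $C_2$ depending only on $\xi$ and on bounds for the derivatives of $g$ ($q$ is smooth there by parabolic regularity, so there is no difficulty in applying the estimate).

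Choosing $\rho=\sqrt{t}/2$, the supremum runs over $0\le s<t/4$ and $|x'-x|<\sqrt t/2$, where $t-s\in(3t/4,t]\subset(0,1]$ and $|x'-y|\ge|x-y|-\sqrt t/2$; feeding this into the Aronson bound and absorbing the bounded factors one gets $\sup\{\cdots\}\le C_3\,t^{-d/2}\exp(-|x-y|^2/(C_4 t))$, and multiplying by $C_2/\rho=2C_2 t^{-1/2}$ gives the claimed $t\in(0,1]$ estimate. For $t\ge1$ the same computation with a cylinder of fixed size $\rho=1/2$, using the global on-diagonal-plus-Gaussian bound for $p$, produces the corresponding estimate without the time singularity; the reduction of the first paragraph then yields Proposition~\ref{Stroock-result} exactly.

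I expect the interior gradient step to be the crux. One has to make sure that the gradient of $p$ is genuinely controlled by the supremum of $p$ over a cylinder with a constant that is scale-invariant and depends only on $\xi$ and the bounds for the derivatives of $g$ --- this is exactly where the regularity hypothesis on $g$ is used, and is the reason the estimate has no analogue for merely measurable coefficients --- and, simultaneously, that the cylinder is taken at the parabolic scale $\rho\sim\sqrt t$, so that passing to the supremum worsens the Gaussian exponent only by a universal multiplicative constant instead of destroying the tail. A purely probabilistic alternative would use the Bismut--Elworthy--Li formula to write $\frac{\partial p}{\partial x_j}(x,t,y)=p(x,t,y)\,\E\!\left[\tfrac1t\int_0^t\langle\sigma(s,X^0_s)^{-1}J_s e_j,\rd B_s\rangle\ \Big|\ X^0_t=y\right]$, with $J_s$ the Jacobian flow of \eqref{SDE-no-drift}: the unconditioned second moment of the stochastic integral is at once $O(1/t)$, but recovering the Gaussian weight from the conditional expectation again needs a Chapman--Kolmogorov splitting of $[0,t]$ against the Aronson bound, so this route does not circumvent the heat-kernel input.
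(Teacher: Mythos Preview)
The paper does not prove Proposition~\ref{Stroock-result}; it is cited from \cite{Stroock-2008} (Theorem~3.3.11 there) and used as a black box in Section~3, so there is no argument in the paper against which to compare yours.

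Your outline is nonetheless a correct and standard route to the estimate. The reduction of the negative-part exponent to a plain Gaussian $e^{-|y-x|^2/(Ct)}$ is valid (the factor $e^{At}$ released by the truncation exactly absorbs the passage from decay rate $1/C$ to $1/A$ in the exponent, and for $|y-x|\le At$ the right-hand side is simply $A/t^{(1+d)/2}\ge C/t^{(1+d)/2}$), and the core mechanism---Aronson's upper bound plus an interior gradient estimate for the backward Kolmogorov equation on a parabolic cylinder of radius $\rho\sim\sqrt t$---is precisely how Gaussian heat-kernel bounds are upgraded to Gaussian gradient bounds. The one point worth making fully explicit is the scale-invariance of the interior step: after rescaling $B_\rho(x)\times[0,\rho^2)$ to unit size, the coefficients become $\tilde g^{ij}(\tau,z)=g^{ij}(\rho^2\tau,x+\rho z)$, whose first and second derivatives are bounded by $\rho\|\nabla g\|_\infty$ and $\rho^2\|\nabla^2 g\|_\infty$, hence uniformly in $\rho\le 1$; this is why the Schauder (or $L^q$) constant $C_2$ is uniform in $\rho$ and depends only on $\xi$ and the derivative bounds for $g$, as the statement announces and as you correctly anticipate. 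With that check in place your argument goes through.
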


Finally, we will also need the following classical Aronson's estimate on the transition probability $p$ for the diffusion \eqref{SDE-no-drift}:

\begin{proposition}[\cite{Aronson-1967}]
\label{Aronson-result}
    There exist two positive constants $\kappa'$ and $\kappa$, depending only on the bounds of $g$, such that 
    \begin{equation*}
        \frac{\kappa'}{t^{\frac{d}{2}}}e^{-\frac{|y-x|^2}{\kappa' t}}\leq p(x,t,y) \leq  \frac{\kappa}{t^{\frac{d}{2}}}e^{-\frac{|y-x|^2}{\kappa t}}.
    \end{equation*}
\end{proposition}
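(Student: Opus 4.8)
The two-sided Gaussian estimate is classical --- it is the content of \cite{Aronson-1967} --- so here I only record the route I would take. Write $L_t = \frac{1}{2}\sum_{i,j} g^{ij}(t,\cdot)\,\partial_{x_i}\partial_{x_j}$ for the generator of \eqref{SDE-no-drift}, so that $p(x,t,y)$ is the fundamental solution of $\partial_t u = L_t u$. Since $g$ has bounded derivatives, $L_t$ can equivalently be written in divergence form $\frac12\sum_{i,j}\partial_{x_i}\bigl(g^{ij}\partial_{x_j}u\bigr) + \sum_j \tb^j\partial_{x_j}u$ with a bounded first-order coefficient $\tb$, which puts us in the standard Aronson--Nash--Moser framework (alternatively one keeps the non-divergence form and invokes the Krylov--Safonov Harnack inequality, available since $g$ is continuous). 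I would prove the upper and lower bounds by separate arguments.

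For the \emph{upper bound} the plan is two steps. First, the on-diagonal estimate $p(x,t,y)\le \kappa\,t^{-d/2}$: with $u(t,\cdot)=p(x,t,\cdot)$ and $\phi(t)=\lVert u(t)\rVert_2^2$, uniform ellipticity gives $\phi'(t)\le -c\,\mathcal{E}_t(u,u)$, and Nash's inequality together with mass conservation $\lVert u(t)\rVert_1=1$ turns this into $\phi'(t)\le -c'\phi(t)^{1+2/d}$; integrating yields $\lVert u(t)\rVert_2^2\le C\,t^{-d/2}$, and then Chapman--Kolmogorov, $p(x,2t,y)=\int p(x,t,z)p(z,t,y)\,\rd z\le \lVert p(x,t,\cdot)\rVert_2\,\lVert p(\cdot,t,y)\rVert_2$, finishes this step. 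Second, upgrade to Gaussian decay by Davies' exponential-perturbation method: for Lipschitz $\psi$ with $|\nabla\psi|\le 1$ and $\lambda\in\R$, conjugate the semigroup to $P^{\lambda\psi}_t = e^{\lambda\psi}P_te^{-\lambda\psi}$, rerun the Nash estimate for the perturbed (still accretive) form to get $\lVert P^{\lambda\psi}_t\rVert_{1\to\infty}\le C\,t^{-d/2}e^{c\lambda^2 t}$, and then optimise over $\psi\approx |\cdot - x|$ and $\lambda\sim |x-y|/(ct)$ to land on $p(x,t,y)\le \kappa\,t^{-d/2}e^{-|x-y|^2/(\kappa t)}$.

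For the \emph{lower bound} I would go through the parabolic Harnack inequality: Moser iteration for positive super- and sub-solutions of $\partial_t u = L_t u$, bridged by a crossover lemma at the level of $\log u$ (the John--Nirenberg / Bombieri--Giusti argument in space-time), gives $\sup_{Q^-}u\le C\inf_{Q^+}u$ on suitable parabolic cylinders $Q^-\subset Q^+$. Applied to $u(t,\cdot)=p(x,t,\cdot)$, this together with the upper bound just obtained and $\int p(x,t,y)\,\rd y=1$ forces a near-diagonal lower bound $p(x,t,y)\ge c\,t^{-d/2}$ whenever $|x-y|\le \sqrt t$. To reach arbitrary $x,y$ I would chain: choose an integer $n\sim |x-y|^2/t$, interpolate equally spaced points $x=z_0,\dots,z_n=y$ with $|z_{k-1}-z_k|\lesssim \sqrt{t/n}$, and iterate Chapman--Kolmogorov over small balls around the $z_k$; each of the $n$ factors contributes a fixed constant $c\in(0,1)$ (the near-diagonal lower bound times the volume of the ball), so the product is $c^n = e^{-C|x-y|^2/t}$, which after relabelling $\kappa'$ is exactly the Gaussian lower bound.

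The \textbf{main obstacle} is the lower bound, and within it the parabolic Harnack inequality: the Moser iterations for $\sup u$ and $\inf u$ are routine, but the oscillation estimate for $\log u$ in space-time that bridges them is the technical heart, and one must then track the constants through the chaining argument carefully enough that the accumulated loss is genuinely of Gaussian order $e^{-C|x-y|^2/t}$ rather than merely sub-exponential. By comparison the upper bound is soft once Nash's inequality is in hand.
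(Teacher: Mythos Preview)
Your sketch is a faithful outline of the standard Nash--Moser--Davies route to Aronson's two-sided Gaussian bounds, and I see no genuine gap in it as a roadmap; however, there is nothing to compare it against. The paper does not prove Proposition~\ref{Aronson-result} at all: it is stated as a citation of the classical result \cite{Aronson-1967} and used as a black box in the subsequent arguments (specifically, only the upper bound~\eqref{Aronson-bound} is invoked, in the proof of the transition-density estimate~\eqref{trans-prob-estimate} and in Lemma~\ref{technical-lemma}). So your proposal is not ``the same approach'' nor ``a different approach'' --- the paper simply has no approach here, and your write-up goes well beyond what the authors intended for this proposition.
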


\section{Transition probability densities}

In this section, we will establish a useful bound of the transition probability density $p_b$. This bound is sharper than the Aronson bound in Proposition \ref{Aronson-result}.

\begin{theorem}
     Let $1<q<\frac{d}{d-1}$ and 
     \begin{equation}
     \label{constant-C}
         C = \max \left\{\frac{2d^2 q}{d-dq+q}\xi^2 A e^A  \kappa ^\frac{1}{q} \left(\frac{A \kappa  \pi}{A\wedge  \kappa q}\right)^\frac{d}{2q}, A\vee  \kappa q \right\}, 
     \end{equation}
     where $A$ and $\kappa$ are the constants in Propositions \ref{Stroock-result} and \ref{Aronson-result}. 
     Then it holds 
     \begin{equation}
     \label{trans-prob-estimate}
         p_b(x,t,y)\leq p(x,t,y) + \lVert b\rVert_\infty \sqrt{t} e^{\frac{\xi}{2(q-1)}\lVert b\rVert_\infty^2 t} \frac{C}{t^\frac{d}{2}}e^{-\frac{|y-x|^2}{Ct}}
     \end{equation}
     for all $t\in[0,1]$, $x,y\in\R^d$.
\end{theorem}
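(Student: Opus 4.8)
The plan is to start from the representation formula of Proposition~\ref{Qian-Zheng-proposition}, which already isolates the difference $p_b-p$ as a single $s$-integral,
\[
p_b(x,t,y) - p(x,t,y) = \int_0^t \E^x\!\left[U^b_s\,\bigl\langle b(s,X^0_s),\nabla^g_x p(X^0_s,t-s,y)\bigr\rangle_g\right]\rd s ,
\]
so it suffices to bound the modulus of the right-hand side. First I would record a pointwise Gaussian bound on the integrand: using Cauchy--Schwarz for $\langle\cdot,\cdot\rangle_g$, the ellipticity bounds on $g$ and $g^{-1}$, and Proposition~\ref{Stroock-result}, together with the fact that $0\le t-s\le t\le 1$ allows one to absorb $\exp(A(t-s))\le e^A$ into the constant, one gets
\[
\bigl|\bigl\langle b(s,z),\nabla^g_x p(z,t-s,y)\bigr\rangle_g\bigr| \;\le\; c_1\,\lVert b\rVert_\infty\,\frac{1}{(t-s)^{(1+d)/2}}\,\exp\!\Bigl(-\frac{|y-z|^2}{A(t-s)}\Bigr),
\]
with $c_1$ a constant depending only on $d$, $\xi$ and $A$ (it is what will feed into the first term of the maximum defining $C$). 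Write $h_s(z)$ for the right-hand side divided by $\lVert b\rVert_\infty$.

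Next I would apply Hölder's inequality inside the conditional expectation with the conjugate pair $p'=\tfrac{q}{q-1}$, $q$:
\[
\E^x\!\left[U^b_s\,\bigl|\langle b(s,X^0_s),\nabla^g_x p(X^0_s,t-s,y)\rangle_g\bigr|\right] \;\le\; \lVert b\rVert_\infty\,\E^x\bigl[(U^b_s)^{p'}\bigr]^{1/p'}\,\E^x\bigl[h_s(X^0_s)^q\bigr]^{1/q}.
\]
The first factor is controlled by the exponential-martingale structure of $U^b$: writing $U^b=\mathcal E(N)$ with $N_s=\int_0^s\langle b(r,X^0_r),\rd M_r\rangle_g$ one has $\langle N\rangle_s=\int_0^s|b(r,X^0_r)|_g^2\,\rd r\le \xi\lVert b\rVert_\infty^2 s$, and from the factorization $\mathcal E(N)^{p'}=\mathcal E(p'N)\exp\!\bigl(\tfrac{p'(p'-1)}{2}\langle N\rangle\bigr)$ together with the fact that $\mathcal E(p'N)$ is a genuine martingale (its quadratic variation is bounded, so Novikov applies) one obtains $\E^x[(U^b_s)^{p'}]\le\exp\!\bigl(\tfrac{q\xi}{2(q-1)^2}\lVert b\rVert_\infty^2 s\bigr)$, hence $\E^x[(U^b_s)^{p'}]^{1/p'}\le\exp\!\bigl(\tfrac{\xi}{2(q-1)}\lVert b\rVert_\infty^2 s\bigr)\le e^{\frac{\xi}{2(q-1)}\lVert b\rVert_\infty^2 t}$, which is exactly the exponential weight appearing in \eqref{trans-prob-estimate}.

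For the second factor I would write $\E^x[h_s(X^0_s)^q]=\int_{\R^d}h_s(z)^q\,p(x,s,z)\,\rd z$, insert the Aronson upper bound $p(x,s,z)\le \kappa s^{-d/2}e^{-|z-x|^2/(\kappa s)}$ from Proposition~\ref{Aronson-result}, and evaluate the Gaussian integral in $z$; this produces, up to $c_1$, the factor
\[
\kappa\Bigl(\frac{\pi A(t-s)\kappa}{\,q\kappa s+A(t-s)\,}\Bigr)^{d/2}\exp\!\Bigl(-\frac{|y-x|^2}{A(t-s)/q+\kappa s}\Bigr),
\]
where the crucial cancellation is that the $s$ produced in the numerator of the polynomial factor kills the singular $s^{-d/2}$ coming from Aronson. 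Then I would estimate $q\kappa s+A(t-s)\ge (A\wedge\kappa q)\,t$ to control the polynomial prefactor, and after raising the whole quantity to the power $1/q$ the exponent becomes $|y-x|^2/(A(t-s)+q\kappa s)$, which I control by $A(t-s)+q\kappa s\le (A\vee\kappa q)t\le Ct$ to produce the Gaussian $e^{-|y-x|^2/(Ct)}$ --- this is precisely where the term $A\vee\kappa q$ in \eqref{constant-C} is needed. Putting the two factors back together and integrating in $s$, the whole time dependence collapses to
\[
t^{-d/(2q)}\int_0^t (t-s)^{-(1+d)/2+d/(2q)}\,\rd s ,
\]
and here is the heart of the matter: the exponent $-\tfrac{1+d}{2}+\tfrac{d}{2q}$ exceeds $-1$ \emph{if and only if} $q<\tfrac{d}{d-1}$, so the integral converges and equals $\tfrac{2q}{d-dq+q}\,t^{\,1/2-d/2+d/(2q)}$; multiplied by the prefactor $t^{-d/(2q)}$ the powers recombine exactly into $t^{1/2-d/2}=\sqrt t\,t^{-d/2}$. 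Collecting the accumulated constants $c_1$, $\kappa^{1/q}$, $(A\kappa\pi/(A\wedge\kappa q))^{d/(2q)}$ and $\tfrac{2q}{d-dq+q}$ and comparing with \eqref{constant-C} then yields \eqref{trans-prob-estimate}. The main obstacle is exactly this final balancing act: the Hölder exponent $q$ must be chosen so that the $U^b$-moment reproduces the stated exponential factor, while \emph{simultaneously} the Stroock singularity of $\nabla^g_x p$ at $s=t$ and the Aronson singularity of $p$ at $s=0$ must both remain integrable after the Gaussian convolution --- and $q<d/(d-1)$ is precisely the threshold that makes this possible, the $\sqrt t$ in \eqref{trans-prob-estimate} being the residue of that borderline integration.
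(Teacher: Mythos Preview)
Your proposal is correct and follows essentially the same route as the paper: start from the Qian--Zheng representation, split the integrand via H\"older with exponents $q$ and $p'=q/(q-1)$, control the $(U^b)^{p'}$-moment by the exponential-martingale identity $\mathcal E(N)^{p'}=\mathcal E(p'N)\exp\bigl(\tfrac{p'(p'-1)}{2}\langle N\rangle\bigr)$, bound the $q$-th moment of $|\nabla_x p|$ by combining Stroock's gradient estimate with Aronson's upper bound and an explicit Gaussian convolution (using $(A\wedge\kappa q)t\le A(t-s)+\kappa q s\le(A\vee\kappa q)t$), and finally observe that the resulting $s$-integral $\int_0^t(t-s)^{d/(2q)-(1+d)/2}\,\rd s$ is finite precisely when $q<d/(d-1)$ and yields the factor $\sqrt t\,t^{-d/2}$. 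The only cosmetic difference is that the paper tracks the constants $d^2\xi^2A e^A$ explicitly from the outset rather than packaging them into a single $c_1$, but the argument is the same.
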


\begin{proof}
It follows from \eqref{trans-prob-formula} in Proposition \ref{Qian-Zheng-proposition} that for any $t\geq 0$, $x,y\in\R^d$,
\begin{equation*}
    I := \left\vert p_b(x,t,y)-p(x,t,y)\right\vert\leq \xi \int^t_0 \E^x\left[U^b_s|b(s,X^0_s)||\nabla_x^g p(X^0_s,t-s,y) | \right]\rd s. 
\end{equation*}
Moreover, by Cauchy-Schwarz inequality,
\begin{equation}
\label{I-bound}
\begin{aligned}
    \left|\nabla_x^g p(X^0_s,t-s,y) \right| = & \sqrt{\sum_i\left(\sum_j g^{ij}\frac{\partial p}{\partial x_j}\right)^2}\\
                                 \leq & d\xi \sqrt{\sum_j \left(\frac{\partial p}{\partial x_j}\right)^2} =d\xi \lvert \nabla_x p\rvert. 
\end{aligned}
\end{equation}
Therefore, using the boundedness of $b$ and H{\" o}lder's inequality, we have that for $p,q>1$ with $\frac{1}{p}+\frac{1}{q}=1$,
\begin{equation*}
    \begin{aligned}
        I\leq & d\xi^2\int^t_0 \E^x\left[U^b_s|b(s,X^0_s)|\left|\nabla_x p(X^0_s,t-s,y)\right|\right]\rd s\\
        \leq & d\xi^2 \lVert b\rVert_\infty \int^t_0 \E^x [|U^b_s|^p]^\frac{1}{p}\E^x\left[\left|\nabla_x p(X^0_s,t-s,y)\right|^q\right]^{\frac{1}{q}}\rd s.
    \end{aligned}
\end{equation*}
Now an application of the estimate in Proposition \ref{Stroock-result} yields that   
\begin{equation*}
\begin{aligned}   
\left\vert \frac{\partial p}{\partial x_j}(X^0_s,t-s,y)\right\vert &\leq \frac{A}{1\wedge(t-s)^\frac{1+d}{2}}\exp\left(-\left(A(t-s)-\frac{|y-X^0_s|^2}{A(t-s)}\right)^-\right)\\
&\leq \frac{A}{1\wedge(t-s)^\frac{1+d}{2}}\exp\left(A(t-s)-\frac{|y-X^0_s|^2}{A(t-s)}\right).
\end{aligned}
\end{equation*}
Therefore, for $0\leq s\leq t\leq 1$, we have
\begin{equation*}
    \begin{aligned}
        \left|\nabla_x p(X^0_s,t-s,y)\right|\leq  e^{A(t-s)} \frac{dA}{(t-s)^\frac{1+d}{2}}\exp\left(-\frac{|y-X^0_s|^2}{A(t-s)}\right).
    \end{aligned}
\end{equation*}
Consequently, we have 
\begin{equation*}
    \begin{aligned}
        \E^x\left[\left|\nabla_x p(X^0_s,t-s,y)\right|^q\right]\leq & \frac{(dA)^qe^{qA(t-s)}}{(t-s)^{\frac{q(1+d)}{2}}}
        \E^x\left[\exp\left(-\frac{q|y-X^0_s|^2}{A(t-s)}\right)\right].
    \end{aligned}
\end{equation*}
By the Aronson's bound \eqref{Aronson-bound} in Proposition \ref{Aronson-result}, there exists some $\kappa>0$ such that  
\begin{equation}
\label{Aronson-bound}
    p(x,t,y)\leq \frac{\kappa}{t^{\frac{d}{2}}}e^{-\frac{|y-x|^2}{\kappa t}},
\end{equation}
so 
\begin{equation*}
    \begin{aligned}
        \E^x\left[\exp\left(-\frac{q|y-X^0_s|^2}{A(t-s)}\right)\right] =& \int_{\R^d} e^{-\frac{q|y-z|^2}{A(t-s)}}p(z,s,x)\rd z\\
        \leq & \int_{\R^d} e^{-\frac{q|y-z|^2}{A(t-s)}}\frac{ \kappa }{s^{\frac{d}{2}}}e^{-\frac{|z-x|^2}{ \kappa s}}\rd z\\
        =& \frac{ \kappa }{s^{\frac{d}{2}}} \int_{\R^d}e^{-\frac{q|y-x-u|^2}{A(t-s)}}e^{-\frac{|u|^2}{ \kappa s}}\rd u\\
        =& \frac{ \kappa }{s^{\frac{d}{2}}} \left(2\pi\frac{ A(t-s)}{2q}\right)^\frac{d}{2} \left(2\pi\frac{  \kappa s}{2}\right)^\frac{d}{2} \frac{1}{\left(2\pi\left(\frac{ A(t-s)}{2q}+\frac{  \kappa s}{2}\right)\right)^\frac{d}{2}}e^{-\frac{|y-x|^2}{\frac{ A(t-s)}{q}+ \kappa s}}\\
        =& (A  \kappa \pi)^\frac{d}{2}\frac{ \kappa (t-s)^\frac{d}{2}}{\left(A(t-s)+ \kappa sq\right)^\frac{d}{2}}e^{-\frac{q|y-x|^2}{ A(t-s)+ \kappa sq}}.
    \end{aligned}
\end{equation*}
Therefore, as $(A\wedge  \kappa q)t\leq A(t-s)+ \kappa sq\leq (A\vee  \kappa q)t$, we deduce that 
\begin{equation}
\label{grad-p-Lq-bound}
    \begin{aligned}
        \E^x\left[\left|\nabla_x p(X^0_s,t-s,y)\right|^q\right]^{\frac{1}{q}}\leq & \frac{(dA)e^{A(t-s)}}{(t-s)^{\frac{(1+d)}{2}}}
        (A \kappa \pi)^\frac{d}{2q} \frac{ \kappa ^\frac{1}{q}(t-s)^\frac{d}{2q}}{((A\wedge  \kappa q)t)^\frac{d}{2q}}e^{-\frac{|y-x|^2}{(A\vee  \kappa q)t}}\\
        \leq & dA e^A  \kappa ^\frac{1}{q} \left(\frac{A \kappa  \pi}{A\wedge  \kappa q}\right)^\frac{d}{2q} (t-s)^{\frac{d}{2q}-\frac{d}{2}-\frac{1}{2}} \frac{1}{t^\frac{d}{2q}} e^{-\frac{|y-x|^2}{(A\vee  \kappa q)t}}.
    \end{aligned}
\end{equation}
Meanwhile, we also have
\begin{equation}
\label{U-Lp-bound}
    \begin{aligned}
       &\E^x[|U^b_s|^p]\\ =& \E^x\left[\exp\left(p\int^s_0 \langle b(u,X_u),\rd M_u\rangle_g -\frac{ p^2}{2}\int^s_0|b(u,X_u)|^2_g\rd u\right)\right.\\
       &\left.\cdot \exp\left(\frac{p(p-1)}{2}\int^s_0|b(u,X_u)|^2_g\rd u\right)  \right]\\
                        \leq & e^{\frac{\xi p(p-1)}{2}\lVert b\rVert_\infty^2 s}\E^x\left[\exp\left(p\int^s_0 \langle b(u,X_u),\rd M_u\rangle_g -\frac{ p^2}{2}\int^s_0|b(u,X_u)|^2_g\rd u\right)\right]\\
                        \leq & e^{\frac{\xi p(p-1)}{2}\lVert b\rVert_\infty^2 s},
    \end{aligned}
\end{equation}
where we utilise the fact that the exponential is a (super)martingale. Consequently, combining \eqref{I-bound}, \eqref{grad-p-Lq-bound} and \eqref{U-Lp-bound} we conclude that 
\begin{equation*}
    \begin{aligned}
                I \leq & d^2 \xi^2 A e^A  \kappa ^\frac{1}{q} \left(\frac{A \kappa  \pi}{A\wedge  \kappa q}\right)^\frac{d}{2q} \lVert b\rVert_\infty \frac{1}{t^\frac{d}{2q}} e^{-\frac{|y-x|^2}{(A\vee  \kappa q)t}} \int^t_0 e^{\frac{\xi (p-1)}{2}\lVert b\rVert_\infty^2 s}  (t-s)^{\frac{d}{2q}-\frac{d}{2}-\frac{1}{2}} \rd s \\  
                \leq & d^2 \xi^2 A e^A  \kappa ^\frac{1}{q} \left(\frac{A \kappa  \pi}{A\wedge  \kappa q}\right)^\frac{d}{2q} \lVert b\rVert_\infty \frac{1}{t^\frac{d}{2q}} e^{\frac{\xi (p-1)}{2}\lVert b\rVert_\infty^2 t-\frac{|y-x|^2}{(A\vee  \kappa q)t}} \int^t_0 (t-s)^{\frac{d}{2q}-\frac{d}{2}-\frac{1}{2}} \rd s. 
    \end{aligned}
\end{equation*}
When we choose $q$ such that 
\[
\frac{d}{2q}-\frac{d}{2}-\frac{1}{2}>-1,
\]
i.e. $q<\frac{d}{d-1}$ and $p>d$, then the integral converges, and implies that 
\begin{equation*}
    \begin{aligned}
        I \leq & \frac{2d^2 q}{d-dq+q}\xi^2 A e^A  \kappa ^\frac{1}{q} \left(\frac{A \kappa  \pi}{A\wedge  \kappa q}\right)^\frac{d}{2q} \lVert b\rVert_\infty \sqrt{t}e^{\frac{\xi (p-1)}{2}\lVert b\rVert_\infty^2 t}\frac{1}{t^\frac{d}{2}} e^{-\frac{|y-x|^2}{(A\vee  \kappa q)t}},
    \end{aligned}
\end{equation*}
which yields the desired result.
\end{proof}

 \begin{remark}
For any $0\leq \tau < t\leq 1$, it also holds
 \begin{equation*}
         p_b(\tau,x,t,y)\leq p(\tau,x,t,y) + \lVert b\rVert_\infty \sqrt{t-\tau} e^{\frac{\xi}{2(q-1)}\lVert b\rVert_\infty^2 (t-\tau)} \frac{C}{(t-\tau)^\frac{d}{2}}e^{-\frac{|y-x|^2}{C(t-\tau)}}
     \end{equation*}
     for all $x,y\in\R^d$, where $C$ is the same as in \eqref{constant-C}.
 \end{remark}
\section{Main Results}

To facilitate the proof of the main result, let us prove two lemmas. For convenience, let us introduce some new notations. Let $R>0$. Set 
\begin{equation*}
    K_{B_R}(x) = K(x)1_{B_R}(x),\qquad K_{B_R^C}(x) = K(x)1_{B_R^C}(x),
\end{equation*}
for all $x\neq 0 \in\R^d$, where $B_R$ is the ball centred at the origin with radius $R$, and $B_R^C$ its complement. 
\begin{lemma}
\label{technical-lemma}
    Let $R>0$ be a constant. Then 
    \begin{equation}
    \label{B_R-bound}
        \int_{\R^d} \E^y\left[\left|K_{B_R}(x-X^b_t)\right|\right]|w(y)|\rd y\leq \frac{2\alpha C^{1+\frac{d}{2}}\pi^d R^{d-\gamma} \lVert w\rVert_\infty }{\Gamma(\frac{d}{2})(d-\gamma)} \left(1+\lVert b\rVert_\infty \sqrt{t} e^{\frac{\xi}{2(q-1)}\lVert b\rVert_\infty^2 t} \right),
    \end{equation}
    \begin{equation}
    \label{B_R-complement-bound}
         \int_{\R^d} \E^y\left[\left|K_{B_R^C}(x-X^b_t)\right|\right]|w(y)|\rd y \leq \frac{\alpha \lVert w\rVert_1}{R^\gamma},
    \end{equation}
    for all $x\in \R^d$ and $t\in [0,1]$.
\end{lemma}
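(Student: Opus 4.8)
The plan is to split each integral into the region $|x - X^b_t| \leq R$ and its complement, estimate the expectation $\E^y[|K_{B_R}(x-X^b_t)|]$ by integrating the bound on $|K|$ against the transition density $p_b(y, t, \cdot)$, and then invoke the sharpened density estimate from the Theorem in Section 3 to control the contribution coming from the drift. For the complement bound \eqref{B_R-complement-bound} the argument is immediate: on $B_R^C$ one has $|K(x-X^b_t)| \leq \alpha R^{-\gamma}$ pointwise, so $\E^y[|K_{B_R^C}(x-X^b_t)|] \leq \alpha R^{-\gamma}$, and integrating against $|w(y)|$ gives $\alpha \lVert w \rVert_1 R^{-\gamma}$. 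No density information is needed here beyond the fact that $p_b(y,t,\cdot)$ is a probability density.

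The bulk of the work is \eqref{B_R-bound}. First I would write
\[
\E^y\left[\left|K_{B_R}(x-X^b_t)\right|\right] = \int_{\R^d} \left|K_{B_R}(x-z)\right| p_b(y,t,z)\, \rd z \leq \alpha \int_{B_R(x)} \frac{p_b(y,t,z)}{|x-z|^\gamma}\, \rd z,
\]
and then substitute the estimate $p_b(y,t,z) \leq p(y,t,z) + \lVert b\rVert_\infty \sqrt{t}\, e^{\frac{\xi}{2(q-1)}\lVert b\rVert_\infty^2 t}\, \frac{C}{t^{d/2}} e^{-|z-y|^2/(Ct)}$ from \eqref{trans-prob-estimate}. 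The $p(y,t,z)$ term is in turn bounded by the Gaussian $\frac{\kappa}{t^{d/2}} e^{-|z-y|^2/(\kappa t)}$ via Aronson (Proposition \ref{Aronson-result}); since $C \geq A \vee \kappa q \geq \kappa$ (from the definition \eqref{constant-C}), both contributions are dominated by a common expression of the form $\frac{C}{t^{d/2}} e^{-|z-y|^2/(Ct)}$, so that
\[
\E^y\left[\left|K_{B_R}(x-X^b_t)\right|\right] \leq \alpha\left(1 + \lVert b\rVert_\infty \sqrt{t}\, e^{\frac{\xi}{2(q-1)}\lVert b\rVert_\infty^2 t}\right) \int_{B_R(x)} \frac{1}{|x-z|^\gamma}\cdot \frac{C}{t^{d/2}} e^{-\frac{|z-y|^2}{Ct}}\, \rd z.
\]
Next I would bound the Gaussian factor crudely by its supremum: $\frac{C}{t^{d/2}} e^{-|z-y|^2/(Ct)} \leq \frac{C}{t^{d/2}}$, which looks dangerous because of the $t^{-d/2}$ blow-up, so instead I would keep a bit more care — but in fact, looking at the target constant $C^{1+d/2}\pi^d$, the intended route seems to be: integrate the full product over $z \in \R^d$ after dropping the indicator on one factor, i.e. use $\int_{B_R(x)} |x-z|^{-\gamma}\,\rd z$ for the singular part (which is finite since $\gamma < d$ and equals $\frac{d\,\omega_d}{d-\gamma} R^{d-\gamma}$ with $\omega_d$ the volume of the unit ball, giving the $R^{d-\gamma}/(d-\gamma)$ and the $\pi^{d/2}/\Gamma(d/2+1)$-type factors), and bound the Gaussian part by $\sup_z \frac{C}{t^{d/2}} e^{-|z-y|^2/(Ct)}$ — no, that still blows up. The correct decoupling must instead integrate the Gaussian in $z$ to get a $t$-independent constant $\int_{\R^d} \frac{C}{t^{d/2}} e^{-|z-y|^2/(Ct)}\,\rd z = C(C\pi)^{d/2}$, while bounding the singular kernel on $B_R(x)$ by its local $L^1$-mass — this requires a Hölder-type split or the observation that on the support $z \in B_R(x)$ one can write $|x-z|^{-\gamma} \leq R^{-\gamma}$ is false near $z=x$; rather one uses that the product $|x-z|^{-\gamma} \cdot (\text{bounded Gaussian})$ has integral $\leq \lVert |x-\cdot|^{-\gamma} 1_{B_R(x)}\rVert_1 \cdot \lVert \text{Gaussian}\rVert_\infty$ OR $\lVert |x-\cdot|^{-\gamma}\rVert_\infty^{loc}$... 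The clean way, which I would adopt, is: estimate the Gaussian uniformly by noting $e^{-|z-y|^2/(Ct)} \leq 1$ and then the $t^{-d/2}$ is absorbed because we only claim the bound for $t \in [0,1]$ — but the statement has no $t^{-d/2}$, so this is wrong too. Hence the genuine step is a convolution-type estimate: $\int_{B_R(x)} |x-z|^{-\gamma} \frac{C}{t^{d/2}} e^{-|z-y|^2/(Ct)}\,\rd z \leq \sup_{w} \int_{B_R(w)} |w-z|^{-\gamma}\,\rd z \cdot \sup_z \frac{C}{t^{d/2}}e^{-|z-y|^2/(Ct)}$ is not it; instead bound $\frac{C}{t^{d/2}}e^{-|z-y|^2/(Ct)} \leq \frac{C}{t^{d/2}}$ and then observe the missing cancellation comes from...

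\emph{The main obstacle}, then, is precisely reconciling the $t^{-d/2}$ Gaussian prefactor with the $t$-independent right-hand side of \eqref{B_R-bound}: the resolution I expect is that one does \emph{not} separate the singular kernel from the Gaussian, but rather uses the substitution $u = x - z$ and bounds $e^{-|z-y|^2/(Ct)}$ trivially by $1$ only \emph{after} recognizing that for the $B_R$ piece the relevant normalization is different — more likely the intended argument bounds $\int_{B_R(x)} |x-z|^{-\gamma} p(y,t,z)\,\rd z$ by first applying Hölder with the same exponents $q, p$ as in Section 3: $\bigl(\int_{B_R(x)} |x-z|^{-\gamma q'}\rd z\bigr)^{1/q'} \bigl(\int p(y,t,z)^{q}\rd z\bigr)^{1/q}$ won't close either unless one just uses $p \leq \kappa t^{-d/2} e^{-\cdots}$ and integrates. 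I would therefore structure the final write-up as: (i) reduce to bounding $J := \int_{B_R(x)} |x-z|^{-\gamma}\, h(z)\, \rd z$ where $h(z) := \frac{C}{t^{d/2}} e^{-|z-y|^2/(Ct)}$ satisfies $\int_{\R^d} h = C(C\pi)^{d/2} \leq C^{1+d/2}\pi^{d/2}$ (using $\pi^{d/2} \leq \pi^d$); (ii) since $\gamma < d$, $|x-\cdot|^{-\gamma}$ is locally integrable and in fact lies in weak-$L^{d/\gamma}$, so by a standard rearrangement/Young-type inequality $J \leq \lVert |x-\cdot|^{-\gamma} 1_{B_R}\rVert_{?} \lVert h \rVert_{?}$ with the exponents chosen so the $h$-norm is the $L^1$-norm — but that forces the kernel into $L^\infty$, contradiction. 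The only internally consistent reading is that $h$ is bounded by its $L^1$ mass times a Dirac-like concentration, i.e. one uses $J \leq \sup_{z \in \R^d}\bigl(\int_{B_R(x)}|x-z|^{-\gamma}\rd z\bigr) \cdot 1$... I will resolve this in the final text by the direct computation $J \leq \bigl(\int_{B_R}|u|^{-\gamma}\rd u\bigr)\sup_z h(z)$ and absorbing $\sup_z h = C t^{-d/2}$ into the constant \emph{using that the claimed bound does in fact implicitly allow it} — rereading \eqref{B_R-bound}, the right side has no $t$, so the genuine mechanism must be $J \le \bigl(\int_{B_R}|u|^{-\gamma}\,\rd u\bigr) \cdot \sup_z h(z)$ combined with the fact that actually $\sup_z h(z)$ should be replaced by using Hölder to trade off, leaving $\int h = C(C\pi)^{d/2}$ against $\int_{B_R}|u|^{-\gamma q'}\rd u)^{1/q'}$. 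I would pick $q$ as in the Theorem so $\gamma q' < d$, compute $\int_{B_R}|u|^{-\gamma q'}\rd u = \frac{d\omega_d R^{d-\gamma q'}}{d-\gamma q'}$ with $\omega_d = \pi^{d/2}/\Gamma(d/2+1)$, and bound $\bigl(\int h^q\bigr)^{1/q} = \bigl(\frac{C^q}{t^{dq/2}}\int e^{-q|z-y|^2/(Ct)}\rd z\bigr)^{1/q} = C(C\pi/q)^{d/2} t^{-d/2+d/(2q)} = C(C\pi/q)^{d/2} t^{-d/(2q')}$; then $J \leq \frac{d\omega_d^{1/q'}}{(d-\gamma q')^{1/q'}} R^{d/q'-\gamma}\, C(C\pi/q)^{d/2}\, t^{-d/(2q')}$, and finally observing $R^{d/q'-\gamma} t^{-d/(2q')}$ is \emph{not} $t$-free either. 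Given the difficulty of pinning down the exact packaging without the authors' constants, I will present the split, the pointwise complement bound, the substitution of \eqref{trans-prob-estimate} and \eqref{Aronson-bound}, and the local integrability of $|x|^{-\gamma}$ on $B_R$ as the skeleton, flagging that the constant $\frac{2\alpha C^{1+d/2}\pi^d R^{d-\gamma}\lVert w\rVert_\infty}{\Gamma(d/2)(d-\gamma)}$ emerges from combining $\int_{B_R}|u|^{-\gamma}\rd u = \frac{2\pi^{d/2}R^{d-\gamma}}{\Gamma(d/2)(d-\gamma)}$ with a uniform-in-$t$ bound on $\sup_{t\in[0,1]} t^{-d/2} \cdot (\text{something})$ — and this last uniform bound, which is where the factor $C^{d/2}$ and an extra $C$ come from, is the true crux of the argument.
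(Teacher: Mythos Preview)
Your treatment of \eqref{B_R-complement-bound} is exactly the paper's argument. For \eqref{B_R-bound} you set up the right ingredients---the substitution $u=x-z$, the density estimate \eqref{trans-prob-estimate}, the Aronson bound, and the observation $C\ge\kappa$---but you then get stuck trying to make the inner integral
\[
\int_{B_R}\frac{1}{|u|^\gamma}\,\frac{C}{t^{d/2}}e^{-|x-y-u|^2/(Ct)}\,\rd u
\]
uniformly bounded in $t$ for \emph{fixed} $y$. It is not, and none of your H\"older or sup-norm attempts can repair that; the blow-up as $t\downarrow 0$ is genuine at the level of a single starting point $y$.

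The missing idea is simply Fubini: you must integrate in $y$ \emph{before} integrating in $u$. After bounding $|w(y)|\le\lVert w\rVert_\infty$, the full quantity is
\[
\lVert w\rVert_\infty\Bigl(1+\lVert b\rVert_\infty\sqrt{t}\,e^{\frac{\xi}{2(q-1)}\lVert b\rVert_\infty^2 t}\Bigr)
\int_{B_R}\frac{\alpha}{|u|^\gamma}\left(\int_{\R^d}\frac{C}{t^{d/2}}e^{-|x-y-u|^2/(Ct)}\,\rd y\right)\rd u,
\]
and the inner $y$-integral is a full Gaussian integral over $\R^d$, equal to $C(C\pi)^{d/2}=C^{1+d/2}\pi^{d/2}$ \emph{independently of $t$, $x$, and $u$}. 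The remaining $u$-integral is $\int_{B_R}\alpha|u|^{-\gamma}\,\rd u=\frac{2\pi^{d/2}\alpha R^{d-\gamma}}{\Gamma(d/2)(d-\gamma)}$, and multiplying gives precisely the constant in \eqref{B_R-bound}. In short, the $t^{-d/2}$ is not killed by any subtle interpolation---it is killed by the outer integral over the initial point $y$, which is why $\lVert w\rVert_\infty$ (not $\lVert w\rVert_1$) appears on the right-hand side.
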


\begin{proof}
    Using the transition probability density function $p_b$, we can write 
    \begin{equation*}
        \begin{aligned}
            \int_{\R^d} \E^y\left[\left|K_{B_R}(x-X^b_t)\right|\right]|w(y)|\rd y \leq & \int_{\R^d}\int_{\R^d} |K_{B_R}(x-z)|p_b(y,t,x)||w(y)|\rd y\rd z\\
            \leq &\int_{\R^d}\left(\int_{B_R} \frac{\alpha}{|u|^\gamma}p_b(y,t,x-u)|w(y)|\rd u\right)\rd y,
        \end{aligned}
    \end{equation*}
    which, after applying \eqref{trans-prob-estimate} and the Aronson's bound \eqref{Aronson-bound}, implies that 
    \begin{equation*}
        \begin{aligned}
            &\int_{\R^d} \E^y\left[\left|K_{B_R}(x-X^b_t)\right|\right]|w(y)|\rd y\\ 
            \leq & \int_{\R^d}\left(\int_{B_R} \frac{\alpha}{|u|^\gamma} \left(p(y,t,x-u) + \lVert b\rVert_\infty \sqrt{t} e^{\frac{\xi}{2(q-1)}\lVert b\rVert_\infty^2 t} \frac{C}{t^\frac{d}{2}}e^{-\frac{|x-y-u|^2}{Ct}}\right)\rd u\right)|w(y)|\rd y\\
            \leq & \int_{\R^d}\left(\int_{B_R}\frac{\alpha}{|u|^\gamma}\left(\frac{ \kappa }{t^{\frac{d}{2}}}e^{-\frac{|x-y-u|^2}{ \kappa t}}+ \lVert b\rVert_\infty \sqrt{t} e^{\frac{\xi}{2(q-1)}\lVert b\rVert_\infty^2 t} \frac{C}{t^\frac{d}{2}}e^{-\frac{|x-y-u|^2}{Ct}}\right)\rd u\right)|w(y)|\rd y \\
            \leq & \lVert w\rVert_\infty \left(1+\lVert b\rVert_\infty \sqrt{t} e^{\frac{\xi}{2(q-1)}\lVert b\rVert_\infty^2 t} \right) \int_{\R^d} \int_{B_R} \frac{\alpha}{|u|^\gamma} \frac{C}{t^\frac{d}{2}}e^{-\frac{|x-y-u|^2}{Ct}} \rd u\rd y,
        \end{aligned}
    \end{equation*}
    where we have used that the constant $C$ given in \eqref{constant-C} is greater than $ \kappa $. Since 
    \begin{equation*}
        \int_{\R^d} \frac{C}{t^\frac{d}{2}}e^{-\frac{|x-y-u|^2}{Ct}} \rd y =\pi^\frac{d}{2} C^{1+\frac{d}{2}}
    \end{equation*}
    and 
    \begin{equation*}
        \int_{B_R} \frac{\alpha}{|u|^\gamma}\rd u = \alpha \int^R_0 \frac{1}{r^\gamma} r^{d-1}\rd r S_{d-1},
    \end{equation*}
    where 
    \begin{equation*}
        S_{d-1} = \int^{2\pi}_0 \int^\pi_0 \cdots \int^\pi_0 \sin^{d-2}(\phi_1)\cdots \sin^2(\phi_{d-3})\sin(\phi_{d-2})\rd \phi_1\cdots \rd \phi_{d-2}\rd \phi_{d-1} = \frac{2\pi^\frac{d}{2}}{\Gamma(\frac{d}{2})}
    \end{equation*}
    is the surface area of $(d-1)$-sphere, we conclude \eqref{B_R-bound}. As for \eqref{B_R-complement-bound}, the proof is straightforward. For all $x\in \R^d$ and $t\geq 0$, 
    \begin{equation*}
        \begin{aligned}
            \int_{\R^d} \E^y\left[\left|K_{B_R^C}(x-X^b_t)\right|\right]|w(y)|\rd y \leq & \int_{\R^d}\int_{B_R^C}|K(u)|p_b(y,t,x-u)|w(y)|\rd u\rd y\\
            \leq & \int_{\R^d}\int_{B_R^C} \frac{\alpha}{|u|^\gamma} p_b(y,t,x-u)|w(y)|\rd u\rd y\\
            \leq & \frac{\alpha}{R^\gamma} \int_{\R^d} \left(\int_{B_R^C} p_b(y,t,x-u)\rd u\right)|w(y)|\rd y\\
            \leq & \frac{\alpha\lVert w\rVert_1}{R^\gamma}.
        \end{aligned}
    \end{equation*}
    Therefore, the proof is complete.
\end{proof}

Now let us set 
\begin{equation*}
    C_0 = \frac{\alpha\lVert w\rVert_1}{R^\gamma} + \frac{2\alpha C^{1+\frac{d}{2}}\pi^d R^{d-\gamma}\lVert w\rVert_\infty }{\Gamma(\frac{d}{2})(d-\gamma)} \left(1+ e^{\frac{\xi}{2(q-1)}} \right). 
\end{equation*}
For each $L\geq C_0\vee 1$, let $T_L:=\frac{1}{L^2}\leq 1$, and 
\[
\mathcal{B}_L([0,T_L]\times\R^d) = \{b\in \mathcal{B}([0,T_L]\times\R^d): \lVert b\rVert_\infty \leq L\}.
\]
Then we have the following result which tells that $\K(b)$ is a mapping from $\mathcal{B}_L([0,T_L]\times\R^d)$ to itself.
\begin{lemma}
    For each $b\in\mathcal{B}_L([0,T_L]\times\R^d)$, $\K(b)\in \mathcal{B}_L([0,T_L]\times\R^d)$.
\end{lemma}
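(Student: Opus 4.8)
The plan is to check the two properties defining $\mathcal{B}_L([0,T_L]\times\R^d)$ for the function $\K(b)$: that it is bounded and measurable on $[0,T_L]\times\R^d$, and that its sup-norm does not exceed $L$. Measurability of $(t,x)\mapsto\K(b)(t,x)$ follows in a routine way from the joint continuity of the transition density $p_b$ recalled in Section 2, Fubini's theorem, and dominated convergence with the integrable majorant supplied by Lemma \ref{technical-lemma}; so the real content is the bound $\lVert\K(b)\rVert_\infty\leq L$.

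To obtain the bound, I would fix $b\in\mathcal{B}_L([0,T_L]\times\R^d)$, $t\in[0,T_L]$ and $x\in\R^d$, and split the kernel as $K=K_{B_R}+K_{B_R^C}$ with $R$ the radius appearing in the definition of $C_0$. Then
\[
|\K(b)(t,x)|\leq\int_{\R^d}\E^y\!\left[|K_{B_R}(x-X^b_t)|\right]|w(y)|\,\rd y+\int_{\R^d}\E^y\!\left[|K_{B_R^C}(x-X^b_t)|\right]|w(y)|\,\rd y,
\]
and applying the two estimates \eqref{B_R-bound} and \eqref{B_R-complement-bound} of Lemma \ref{technical-lemma} bounds the right-hand side by
\[
\frac{2\alpha C^{1+\frac d2}\pi^d R^{d-\gamma}\lVert w\rVert_\infty}{\Gamma(\frac d2)(d-\gamma)}\Bigl(1+\lVert b\rVert_\infty\sqrt t\,e^{\frac{\xi}{2(q-1)}\lVert b\rVert_\infty^2 t}\Bigr)+\frac{\alpha\lVert w\rVert_1}{R^\gamma}.
\]

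The key point will be the scaling encoded in the choice $T_L=1/L^2$: since $\lVert b\rVert_\infty\leq L$ and $t\leq T_L=1/L^2$, one has $\lVert b\rVert_\infty\sqrt t\leq1$ and $\lVert b\rVert_\infty^2 t\leq1$, hence $\lVert b\rVert_\infty\sqrt t\,e^{\frac{\xi}{2(q-1)}\lVert b\rVert_\infty^2 t}\leq e^{\frac{\xi}{2(q-1)}}$. Inserting this into the previous display gives $|\K(b)(t,x)|\leq C_0$, and since $t\in[0,T_L]$ and $x\in\R^d$ are arbitrary and $L\geq C_0\vee1$, this yields $\lVert\K(b)\rVert_\infty\leq C_0\leq L$, i.e. $\K(b)\in\mathcal{B}_L([0,T_L]\times\R^d)$. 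I do not expect a genuine obstacle here; the only point deserving care is that Lemma \ref{technical-lemma} must be invoked with $\lVert b\rVert_\infty$ rather than with $L$, the $L$-dependence being removed only afterwards via $t\leq1/L^2$, and since $t\mapsto\lVert b\rVert_\infty\sqrt t\,e^{\frac{\xi}{2(q-1)}\lVert b\rVert_\infty^2 t}$ is nondecreasing, evaluating at $t=T_L$ gives a bound uniform over the whole interval $[0,T_L]$.
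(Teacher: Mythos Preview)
Your proposal is correct and follows essentially the same argument as the paper: split $K=K_{B_R}+K_{B_R^C}$, apply the two bounds of Lemma~\ref{technical-lemma}, and then use $\lVert b\rVert_\infty\sqrt t\le 1$ and $\lVert b\rVert_\infty^2 t\le 1$ (from $t\le T_L=1/L^2$) to reduce the right-hand side to $C_0\le L$. The only difference is that you add a short remark on measurability, which the paper omits.
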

\begin{proof}
    We only need to show that $\K(b)$ is bounded above by $L$ for every $b\in \mathcal{B}_L([0,T_L]\times\R^d) $. Let $R>0$. We notice that for every $t\in [0,T_L]$ and $x\in\R^d$,
    \begin{equation*}
        \begin{aligned}
            |\K(b)(t,x)| \leq& \int_{\R^d} \E^y\left[\left|K_{B_R}(x-X^b_t)\right|\right]|w(y)|\rd y +\int_{\R^d} \E^y\left[\left|K_{B_R^C}(x-X^b_t)\right|\right]|w(y)|\rd y.
        \end{aligned}
    \end{equation*}
    Now let us apply the estimates \eqref{B_R-bound} and \eqref{B_R-complement-bound}. Then we deduce that 
    \begin{equation*}
        \begin{aligned}
            |\K(b)(t,x)| \leq& \frac{2\alpha C^{1+\frac{d}{2}}\pi^d R^{d-\gamma} \lVert w\rVert_\infty }{\Gamma(\frac{d}{2})(d-\gamma)} \left(1+\lVert b\rVert_\infty \sqrt{t} e^{\frac{\xi}{2(q-1)}\lVert b\rVert_\infty^2 t} \right)+\frac{\alpha \lVert w\rVert_1}{R^\gamma}.
        \end{aligned}
    \end{equation*}
    Since $\lVert b\rVert_\infty \leq L$ and $t\leq T_L$, $\lVert b\rVert_\infty \sqrt{t}\leq 1$, and thus 
    \begin{equation*}
        |\K(b)(t,x)| \leq \frac{2\alpha C^{1+\frac{d}{2}}\pi^d R^{d-\gamma} \lVert w\rVert_\infty }{\Gamma(\frac{d}{2})(d-\gamma)} \left(1+ e^{\frac{\xi}{2(q-1)}} \right)+\frac{\alpha \lVert w\rVert_1}{R^\gamma}\leq L.
    \end{equation*}
Therefore, we conclude that $\K(b)\in \mathcal{B}_L([0,T_L]\times \R^d)$.
\end{proof}

Now we are ready to prove the main result, which is to show the existence and uniqueness of the weak solution by showing that $\K$ is a contraction. 

\begin{theorem}
Let $0<\tau <(1\wedge \frac{1}{\xi +\sqrt{\xi}})T_L$. 
 Then $\K:\mathcal{B}_L([0,\tau]\times \R^d) \to \mathcal{B}_L([0,\tau]\times \R^d)$ is a contraction. Moreover, this implies that the equation \eqref{Mckean-Vlasov-equation} has a unique weak solution up to time $\tau$.
\end{theorem}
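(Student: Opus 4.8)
The plan is to prove that $\K$ is a contraction of the complete metric space $\mathcal{B}_L([0,\tau]\times\R^d)$ by comparing the transition probability density functions of the two diffusions $X^{b_1}$ and $X^{b_2}$ driven by $b_1,b_2\in\mathcal{B}_L([0,\tau]\times\R^d)$, and then to read off the well-posedness of \eqref{Mckean-Vlasov-equation} from the Banach fixed point theorem together with Girsanov's theorem. The previous lemma already gives $\K(\mathcal{B}_L([0,\tau]\times\R^d))\subseteq\mathcal{B}_L([0,\tau]\times\R^d)$ for $\tau\le T_L$ (note that $\K(b)(t,\cdot)$ depends on $b$ only through its restriction to $[0,t]$), so only the Lipschitz estimate remains.

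For $0\le t\le\tau$ and $x\in\R^d$ write $\K(b_1)(t,x)-\K(b_2)(t,x)=\int_{\R^d}\int_{\R^d}K(x-z)\bigl(p_{b_1}(y,t,z)-p_{b_2}(y,t,z)\bigr)\cdot w(y)\,\rd z\,\rd y$ and, as in Lemma \ref{technical-lemma}, split $K=K_{B_R}+K_{B_R^C}$. Everything then rests on a pointwise bound $|p_{b_1}(y,t,z)-p_{b_2}(y,t,z)|\le C'\lVert b_1-b_2\rVert_\infty\sqrt{t}\,t^{-d/2}e^{-|z-y|^2/(Ct)}$ for $t\in[0,T_L]$, with $C'$ depending only on $\xi,q,L$. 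To get it, subtract \eqref{trans-prob-formula} for $b_1$ and $b_2$ and decompose $U^{b_1}_s\langle b_1(s,X^0_s),\cdot\rangle_g-U^{b_2}_s\langle b_2(s,X^0_s),\cdot\rangle_g = U^{b_1}_s\langle b_1(s,X^0_s)-b_2(s,X^0_s),\cdot\rangle_g+(U^{b_1}_s-U^{b_2}_s)\langle b_2(s,X^0_s),\cdot\rangle_g$. The first term is estimated exactly as in the proof of \eqref{trans-prob-estimate}, with $\lVert b_1-b_2\rVert_\infty$ replacing the linear factor and $\lVert b_1\rVert_\infty\le L$ controlling $\E^y[|U^{b_1}_s|^p]^{1/p}$ via \eqref{U-Lp-bound}. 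The second term calls for the new ingredient: the $L^p$-bound $\E^y[|U^{b_1}_s-U^{b_2}_s|^p]^{1/p}\le c(\xi,p,L)\sqrt{s}\,\lVert b_1-b_2\rVert_\infty$ on $[0,T_L]$. This follows from the elementary estimate $|U^{b_1}_s-U^{b_2}_s|\le(U^{b_1}_s+U^{b_2}_s)\bigl|(N^1_s-\tfrac12\langle N^1\rangle_s)-(N^2_s-\tfrac12\langle N^2\rangle_s)\bigr|$, where $N^i_s=\int_0^s\langle b_i(u,X^0_u),\rd M_u\rangle_g$, followed by H\"older's inequality, the bound \eqref{U-Lp-bound} for the $U$-factors, and the Burkholder--Davis--Gundy inequality for the difference of the exponents, whose martingale part has bracket $\le\xi\lVert b_1-b_2\rVert_\infty^2 s$ and whose finite-variation part is $O(\lVert b_1-b_2\rVert_\infty)$. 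Feeding the two pieces back through the H\"older step and the computation \eqref{grad-p-Lq-bound} of Section 3 --- the extra $\sqrt{s}$ in the second piece only improves the time integrability --- yields the asserted bound on $|p_{b_1}-p_{b_2}|$, after enlarging $C$ in \eqref{constant-C} if necessary.

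With this bound, repeating the computations of Lemma \ref{technical-lemma} verbatim (splitting $K=K_{B_R}+K_{B_R^C}$, and for the tail part bounding $\int_{\R^d}|p_{b_1}(y,t,z)-p_{b_2}(y,t,z)|\,\rd z$ either directly from the Gaussian bound just obtained or via Pinsker's inequality applied to the two path measures, which gives the factor $\sqrt{\xi}\,\lVert b_1-b_2\rVert_\infty\sqrt{t}$) produces, for $0\le t\le\tau$, an estimate of the form $|\K(b_1)(t,x)-\K(b_2)(t,x)|\le\Theta(\tau)\,\lVert b_1-b_2\rVert_\infty$, where $\Theta(\tau)$ is built from $R^{d-\gamma}\lVert w\rVert_\infty$, $R^{-\gamma}\lVert w\rVert_1$, $\alpha$, $\xi$, $C$ and a positive power of $\tau$. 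Since $\tau\le T_L=L^{-2}$ and $L\ge C_0\vee1$, the growth in $L$ is absorbed exactly as in the proof that $\K$ maps $\mathcal{B}_L$ into itself, so $\Theta(\tau)\le c'(\xi)(\tau/T_L)^{1/2}$ for a constant $c'(\xi)$ depending on $\xi$ (and on $A,\kappa,q,d$); tracking the factors of $\xi$ that enter through $|\cdot|_g$, through the $U$-difference estimate, and through Pinsker's inequality then shows $\Theta(\tau)<1$ whenever $\tau<(1\wedge\tfrac1{\xi+\sqrt{\xi}})T_L$. Hence $\K$ is a contraction and has a unique fixed point $b^\ast=\K(b^\ast)$ in $\mathcal{B}_L([0,\tau]\times\R^d)$.

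Finally, to obtain the weak solution: as $b^\ast$ is bounded and measurable and $\sigma$ is continuous and uniformly elliptic, the linear SDE \eqref{SDE-b-drift} with drift $b^\ast$ admits a weak solution $X=X^{b^\ast}$, unique in law, obtained from $X^0$ by the Girsanov change of measure with the genuine martingale density $U^{b^\ast}_\cdot$. Its drift equals $b^\ast(t,x)=\K(b^\ast)(t,x)=\int_{\R^d}\E[K(x-X_t)\mid X_0=y]\cdot w(y)\,\rd y=u(t,x)$, so $X$ solves \eqref{Mckean-Vlasov-equation} weakly on $[0,\tau]$. Conversely, any weak solution of \eqref{Mckean-Vlasov-equation} on $[0,\tau]$ whose drift $u$ lies in $\mathcal{B}_L([0,\tau]\times\R^d)$ --- a short bootstrap via Lemma \ref{technical-lemma} shows that a bounded drift automatically lies in $\mathcal{B}_L$ once $\tau\le T_L$ --- satisfies $u=\K(u)$, hence $u=b^\ast$ by uniqueness of the fixed point, so its law is uniquely determined. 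I expect the genuinely new and technically most delicate point to be the $L^p$-estimate for $U^{b_1}_s-U^{b_2}_s$ with the correct $\sqrt{s}$ scaling and constants uniform over $[0,T_L]$; the remainder is essentially a re-run of the arguments of Sections 3 and 4.
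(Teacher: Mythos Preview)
Your approach is sound but takes a different route from the paper's, and your final claim about the exact threshold is not supported by your argument.

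The paper does \emph{not} pass through a pointwise bound on $|p_{b_1}-p_{b_2}|$. Instead it writes the difference directly as
\[
\K(b)(t,x)-\K(\tilde b)(t,x)=\int_{\R^d}\E^y\bigl[K(x-X^0_t)\,(U^b_t-U^{\tilde b}_t)\bigr]\cdot w(y)\,\rd y,
\]
applies the mean value theorem to the exponential to obtain $U^b_t-U^{\tilde b}_t=e^{\theta N^b_t+(1-\theta)N^{\tilde b}_t}(N^b_t-N^{\tilde b}_t)$, and then observes the key algebraic identity $\theta N^b_t+(1-\theta)N^{\tilde b}_t=N^{b_\theta}_t-\tfrac12\theta(1-\theta)\int_0^t|b-\tilde b|_g^2\,\rd s$ with $b_\theta=\theta b+(1-\theta)\tilde b$. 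The point is that the prefactor is now (dominated by) a genuine Girsanov density $U^{b_\theta}_t$, so after splitting $K=K_{B_R}+K_{B_R^C}$ and $N^b_t-N^{\tilde b}_t=Z_t+A_t$, each of the four resulting terms is an expectation against the law of a diffusion $X^{b_\theta}$ or $X^{rb_\theta}$, and Lemma~\ref{technical-lemma} applies \emph{verbatim}. This is what produces the clean Lipschitz factor $C_0(\xi+\sqrt{\xi})\sqrt{t}$: the $\sqrt{\xi}$ comes from the $Z$-terms via BDG on $\langle Z\rangle_t\le\xi t\lVert b-\tilde b\rVert_\infty^2$, the $\xi$ from the $A$-terms, and the common factor $C_0$ is exactly the constant already fixed by Lemma~\ref{technical-lemma}.

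Your detour through a Gaussian bound on $|p_{b_1}-p_{b_2}|$ also works and is arguably more modular (the density-difference estimate is of independent interest), but it introduces extra constants: an iterated H\"older (one to separate $|U^{b_1}_s-U^{b_2}_s|$ from $|\nabla_x p|$, another to separate the $U$-sum from $|N^{b_1}-N^{b_2}|$), BDG constants in higher $L^p$, and the Gaussian normalisation $(\pi C)^{d/2}$ when you integrate the density bound in $z$. Consequently the contraction factor you obtain is of the form $c(\xi,q,d,A,\kappa)\sqrt{\tau/T_L}$ for some constant strictly larger than $C_0(\xi+\sqrt{\xi})$, so you get a \emph{smaller} time horizon than the one stated; the assertion that ``tracking the factors of $\xi$'' yields precisely $\tau<(1\wedge\tfrac1{\xi+\sqrt{\xi}})T_L$ is not justified by your argument. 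If you want the threshold as stated, you should follow the paper's mean-value-theorem route so that Lemma~\ref{technical-lemma} can be reused without loss.
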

\begin{proof}
Let $b$ and $\tilde{b}$ be two bounded and measurable functions in $\mathcal{B}_L([0,T_L]\times \R^d)$. Then for every $t\in[0,T_L]$ and $x\in\R^d$:
\begin{equation*}
    \begin{aligned}
        \left|\K(b)(t,x)-\K(\tb)(t,x)\right|=\left|\int_{\R^d}\E^y\left[K(x-X^0_t)(U^b_t-U^{\tb}_t)\right]\cdot w(y)\rd y\right|,
    \end{aligned}
\end{equation*}
where $X^0_t$ satisfies \eqref{SDE-no-drift}, and $U^b$ and $U^{\tb}$ are defined as in \eqref{radon-nikodym-deriv}, i.e. 
\begin{equation*}
    U^b_t = \exp\left(\int^t_0 \langle b(s,X^0_s), \rd M_s\rangle_g -\frac{1}{2}\int^t_0 \left\vert b(s,X^0_s) \right\vert^2_g \rd s\right).
\end{equation*}
For simplicity, let us denote the exponent of the Radon-Nikodym derivative by $N_b$, i.e. 
\begin{equation*}
    N^b_t=\int^t_0 \langle b(s,X^0_s), \rd M_s\rangle_g -\frac{1}{2}\int^t_0 \left\vert b(s,X^0_s) \right\vert^2_g \rd s. 
\end{equation*}
Then by the mean value theorem, there exists some $\theta\in(0,1)$ such that 
\begin{equation*}
\begin{aligned}
    U^b_t-U^{\tb}_t = & e^{\theta N^b_t+(1-\theta)N^{\tb}_t}(N^b_t-N^{\tb}_t),
\end{aligned}
\end{equation*}
where $b_\theta=\theta b+(1-\theta) \tb$. 

Notice that 
\begin{equation}
\label{N-intermediate}
    \begin{aligned}
        &\theta N^b_t+(1-\theta)N^{\tb}_t\\ 
        =& \int^t_0 \langle (\theta b+(1-\theta\tb))(s,X^0_s),\rd M_s\rangle_g -\frac{1}{2}\int^t_0 \theta|b(s,X^0_s)|_g^2+(1-\theta)|\tb(s,X^0_s)|_g^2 \rd s\\
        =& N^{b_\theta}_t +\frac{1}{2}\theta(\theta -1) |b(s,X^0_s)|_g^2 +2\theta(1-\theta)\langle b(s,X^0_s),\tb(s,X^0_s)\rangle_g -\theta(1-\theta)|\tb(s,X^0_s)|_g^2\rd s\\
        =& N^{b_\theta}_t-\frac{1}{2}\theta(1-\theta)\int^t_0 |b(s,X^0_s)-\tb(s,X^0_s)|_g^2\rd s,
    \end{aligned}
\end{equation}
where 
\begin{equation*}
    N^{b_\theta}_t = \int^t_0 \langle \theta b(s,X^0_s) +(1-\theta)\tb(s,X^0_s),\rd M_s\rangle_g-\frac{1}{2}\int^t_0 |\theta b(s,X^0_s)+(1-\theta)\tb(s,X^0_s)|_g^2\rd s.
\end{equation*}
Meanwhile, we can decompose the difference into two parts as
\begin{equation*}
        N^b_t-N^{\tb}_t = Z_t + A_t,
\end{equation*}
where 
\begin{equation}
\label{Z-martingale}
    Z_t = \int^t_0 \langle b(s,X^0_s)-\tb(s,X^0_s),\rd M_s\rangle_g
\end{equation}
is the martingale part and 
\begin{equation*}
    A_t =-\frac{1}{2}\int^t_0 (|b(s,X^0_s)|_g^2 - |\tb(s,X^0_s)|_g^2)\rd s.
\end{equation*}
Consequently, since $b,\tb\in \mathcal{B}_L([0,T_L]\times\R^d)$, we may conclude that 
\begin{equation}
\label{N-difference}
    \begin{aligned}
        |N^b_t-N^{\tb}_t|\leq & |Z_t| +|A_t|\\
        \leq & |Z_t| +\frac{1}{2}\xi \int^t_0 \left(|b(s,X^0_s)|+|\tb(s,X^0_s)|\right)\lVert b-\tb\rVert_\infty \rd s\\
        \leq & |Z_t| + \xi L \lVert b-\tb\rVert_\infty t.
    \end{aligned}
\end{equation}
Using \eqref{N-intermediate} and \eqref{N-difference}, we get that 
\begin{equation*}
    \begin{aligned}
        |U^b_t-U^{\tb}_t| =& 
        U^{b_\theta}_t e^{-\frac{1}{2}\theta(1-\theta)\int^t_0 |b(s,X^0_s)-\tb(s,X^0_s)|_g^2\rd s}|N^b_t-N^{\tb}_t|\\
        \leq & U^{b_\theta}_t(|Z_t| + \xi L \lVert b-\tb\rVert_\infty t).
    \end{aligned}
\end{equation*}

It follows from the above estimate immediately that 
\begin{equation*}
    \begin{aligned}
         \left|\K(b)(t,x)-\K(\tb)(t,x)\right|\leq & \int_{\R^d} \E^y\left[|K(x-X^0_t)||U^b_t-U^{\tb}_t|\right]|w(y)|\rd y\\
         \leq & \int_{\R^d} \E^y\left[|K_{B_R}(x-X^0_t)||Z_t|U^{b_\theta}_t\right]|w(y)|\rd y\\
         &+ \int_{\R^d} \E^y\left[|K_{B_R^C}(x-X^0_t)||Z_t|U^{b_\theta}_t\right]|w(y)|\rd y\\
         &+ \xi L \lVert b-\tb\rVert_\infty t \int_{\R^d} \E^y\left[|K_{B_R}(x-X^0_t)|U^{b_\theta}_t\right]|w(y)|\rd y\\
         &+ \xi L \lVert b-\tb\rVert_\infty t \int_{\R^d} \E^y\left[|K_{B_R^C}(x-X^0_t)|U^{b_\theta}_t\right]|w(y)|\rd y\\
         =:& I_1 +I_2+I_3+I_4
    \end{aligned}
\end{equation*}
For the first term $I_1$ on the right-hand side, we can apply the H{\"o}lder's  and Burkholder-Davis-Gundy inequalities and deduce that 
\begin{equation*}
    \begin{aligned}
        I_1 \leq & \left(\E^y \left[K_{B_R}^r(x-X^0_t)(U^{b_\theta}_t)^r\right]|w(y)|\rd y\right)^\frac{1}{r} \left(\int_{\R^d}\E^y[|Z_t|^s]|w(y)|\rd y\right)^\frac{1}{s}\\
        \leq &\left(\E^y \left[K_{B_R}^r(x-X^0_t)(U^{b_\theta}_t)^r\right]|w(y)|\rd y\right)^\frac{1}{r} \left(\int_{\R^d}\E^y[\langle Z_t\rangle^\frac{s}{2}]|w(y)|\rd y\right)^\frac{1}{s}.
    \end{aligned}
\end{equation*}
By the definition of $Z$ in \eqref{Z-martingale},
\begin{equation}
\label{Z-quadratic-variation}
    \begin{aligned}
        \langle Z\rangle_t = & \int^t_0 g_{ij}g_{kl} g^{jl} (b^i-\tb^i)(b^k-\tb^k)(s,X^0_s)\rd s\\
        = & \int^t_0 |b(s,X^0_s)-\tb(s,X^0_s)|_g^2\rd s\\
        \leq & \xi t \lVert b-\tb \rVert_\infty^2.
    \end{aligned}
\end{equation}
As for $(U^{b_\theta})^r$, we have that 
\begin{equation*}
    \begin{aligned}
        (U^{b_\theta}_t)^r = & \exp\left(r\int^t_0 \langle b_\theta(s,X^0_s),\rd M_s\rangle_g -\frac{r}{2}\int^t_0 |b_\theta(s,X^0_s)|_g^2 \rd s\right)\\
        =& \exp\left(\int^t_0 \langle rb_\theta(s,X^0_s),\rd M_s\rangle_g - \frac{1}{2}\int^t_0 |rb_\theta(s,X^0_s)|_g^2 \rd s+\frac{r(r-1)}{2}\int^t_0 |b_\theta(s,X^0_s)|_g^2\rd s\right)\\
        =& U^{rb_\theta}_t \cdot \exp\left(\frac{r(r-1)}{2}\int^t_0 |b_\theta(s,X^0_s)|_g^2\rd s\right)\\
        \leq & e^{\frac{r(r-1)}{2}\xi \lVert b_\theta\rVert_\infty ^2 t}U^{rb_\theta}_t,   
    \end{aligned}
\end{equation*}
where we have used that $\lVert b_\theta\rVert_\infty\leq \theta\lVert b\rVert_\infty +(1-\theta)\lVert \tb\rVert_\infty \leq L$. By definition, we also have that 
\begin{equation*}
    K_{B_R}^r(x-X^0_t)\leq \frac{\alpha^r}{|x-X^0_t|^{\gamma r}} 1_{\{|x-X^0_t|\leq R\}},
\end{equation*}
so we may conclude that 
\begin{equation*}
    \E^y \left[K_{B_R}^r(x-X^0_t)(U^{b_\theta}_t)^r\right] \leq \E^y \left[\frac{\alpha^r}{|x-X^0_t|^{\gamma r}} 1_{\{|x-X^0_t|\leq R\}} e^{\frac{r(r-1)}{2}\xi \lVert b_\theta\rVert_\infty^2 t}U^{rb_\theta}_t\right].
\end{equation*}
Together with \eqref{Z-quadratic-variation}, it follows that 
\begin{equation*}
\begin{aligned}
    I_1\leq & e^{\frac{r-1}{2}\xi \lVert b_\theta\rVert_\infty^2 t} \sqrt{\xi t}\lVert b-\tb\rVert_\infty 
    \left(\E^y \left[K_{B_R}^r(x-X^0_t)U^{rb_\theta}_t\right]|w(y)|\rd y\right)^\frac{1}{r} \left(\int_{\R^d}|w(y)|\rd y\right)^\frac{1}{s},
\end{aligned}
\end{equation*}
and from the proof of Lemma \ref{technical-lemma}, we can see that when we take $1<r<\frac{d}{\gamma}$, it follows that 
\begin{equation*}
    \begin{aligned}
        I_1\leq & e^{\frac{r-1}{2}\xi \lVert b_\theta\rVert_\infty^2 t} \sqrt{\xi t}\lVert b-\tb\rVert_\infty \lVert w\rVert_1^{\frac{1}{s}} \left(\int_{\R^d}\int_{B_R} \frac{\alpha^r}{|u|^{r\gamma}} p_{rb_\theta}(y,t,x-u)|w(y)|\rd u\rd y
        \right)^{\frac{1}{r}}\\
        \leq & \sqrt{\xi}\lVert b-\tb\rVert_\infty \lVert w\rVert_1^\frac{1}{s} e^{\frac{r-1}{2}\xi \lVert b_\theta\rVert_\infty^2 t}\sqrt{t}\\
        &\cdot \left(\frac{2\lVert w\rVert_\infty \pi^d}{\Gamma(\frac{d}{2})}C^{1+\frac{d}{2}} \alpha^r \frac{R^{d-r\gamma}}{d-r\gamma}\left(1+r\lVert b_\theta \rVert_\infty \sqrt{t} e^{\frac{\xi(p-1)}{2}r\lVert b_\theta\rVert_\infty^2 t}\right)\right)^\frac{1}{r}.
    \end{aligned}
\end{equation*}
Then for $t\leq T_L$, as $\lVert b_\theta\rVert_\infty\leq L$, $\lVert b_\theta \rVert_\infty \sqrt{t}\leq 1$, we deduce that 
\begin{equation*}
    I_1\leq \sqrt{\xi}\lVert b-\tb\rVert_\infty \lVert w\rVert_1^\frac{1}{s} e^{\frac{r-1}{2}\xi }\sqrt{t} \left(\frac{2\lVert w\rVert_\infty \pi^d}{\Gamma(\frac{d}{2})}C^{1+\frac{d}{2}} \alpha^r \frac{R^{d-r\gamma}}{d-r\gamma}\left(1+r e^{\frac{\xi(p-1)}{2}r}\right)\right)^\frac{1}{r}.
\end{equation*}
Similarly, for $I_2$, it follows from \eqref{Z-quadratic-variation} and the proof of Lemma \ref{technical-lemma} that 
\begin{equation*}
    \begin{aligned}
        I_2\leq & \left(\E^y \left[K_{B_R^C}^r(x-X^0_t)(U^{b_\theta}_t)^r\right]|w(y)|\rd y\right)^\frac{1}{r} \left(\int_{\R^d}\E^y[\langle Z_t\rangle^\frac{s}{2}]|w(y)|\rd y\right)^\frac{1}{s}\\
        \leq & \left(\int_{\R^d}\int_{B_R^C}\frac{\alpha^r}{|u|^{r\gamma}}p_{rb_\theta}(y,t,x-u)e^{\frac{r}{2}(r-1)\xi\lVert  b_\theta\rVert_\infty^2 t}\rd u |w(y)|\rd y\right)^{\frac{1}{r}}\\
        &\cdot \left(\int_{\R^d}(\xi t\lVert b-\tb\rVert_\infty^2)^\frac{s}{2}|w(y)|\rd y\right)^\frac{1}{s}\\
        \leq & \sqrt{\xi}\lVert b-\tb \rVert_\infty \lVert w\rVert_1^\frac{1}{s} e^{\frac{r-1}{2}\xi\lVert b_\theta\rVert_\infty^2 t} \left(\frac{\alpha^r}{R^{r\gamma}}\int_{\R^d}\int_{B_R^C} p_{rb_\theta}(y,t,x-u)\rd u |w(y)|\rd y\right)^\frac{1}{r}\\
        \leq & \sqrt{\xi}e^{\frac{r-1}{2}\xi\lVert b_\theta\rVert_\infty^2 t} \frac{\alpha}{R^\gamma}\lVert w\rVert_1 \lVert b-\tb \rVert_\infty \sqrt{t}.
    \end{aligned}
\end{equation*}
Again, when we take $t\leq T_L$, we arrive at 
\begin{equation*}
    I_2\leq \sqrt{\xi}e^{\frac{r-1}{2}\xi} \frac{\alpha}{R^\gamma}\lVert w\rVert_1 \lVert b-\tb \rVert_\infty \sqrt{t}.
\end{equation*}
Therefore, putting together the previous estimates,
\begin{equation*}
\begin{aligned}
    I_1+I_2\leq &\left( \lVert w\rVert_1^{1-\frac{1}{r}}  \left(\frac{2\lVert w\rVert_\infty \pi^d}{\Gamma(\frac{d}{2})}C^{1+\frac{d}{2}} \alpha^r \frac{R^{d-r\gamma}}{d-r\gamma}\left(1+r e^{\frac{\xi(p-1)}{2}r}\right)\right)^\frac{1}{r}+ \frac{\alpha}{R^\gamma}\lVert w\rVert_1  \right)\\
    &\cdot \sqrt{\xi}e^{\frac{r-1}{2}\xi}\lVert b-\tb\rVert_\infty\sqrt{t}.
\end{aligned}
\end{equation*}
As the above bound holds for all $r\in (1,\frac{d}{\gamma})$, so when we send $r\downarrow 1$, we obtain that 
\begin{equation*}
\begin{aligned}
    I_1 + I_2\leq &\lim_{r\downarrow 1} \left( \lVert w\rVert_1^{1-\frac{1}{r}}  \left(\frac{2\lVert w\rVert_\infty \pi^d}{\Gamma(\frac{d}{2})}C^{1+\frac{d}{2}} \alpha^r \frac{R^{d-r\gamma}}{d-r\gamma}\left(1+r e^{\frac{\xi(p-1)}{2}r}\right)\right)^\frac{1}{r}+ \frac{\alpha}{R^\gamma}\lVert w\rVert_1  \right)\\
    &\cdot \sqrt{\xi}e^{\frac{r-1}{2}\xi}\lVert b-\tb\rVert_\infty\sqrt{t}\\
    = & \left(\frac{2 \alpha C^{1+\frac{d}{2}} \pi^d R^{d-\gamma} \lVert w\rVert_\infty }{\Gamma(\frac{d}{2})(d-\gamma)}\left(1+ e^{\frac{\xi(p-1)}{2}}\right)+ \frac{\alpha \lVert w\rVert_1}{R^\gamma}  \right) \sqrt{\xi}\lVert b-\tb\rVert_\infty\sqrt{t}\\
    = & C_0 \sqrt{\xi}\lVert b-\tb\rVert_\infty\sqrt{t}. 
\end{aligned}
\end{equation*}
Next let us handle the rest two terms $I_3$ and $I_4$. Again, from the proof of Lemma \ref{technical-lemma}, we see that 
\begin{equation*}
    \begin{aligned}
        I_3 =&  \xi L \lVert b-\tb\rVert_\infty t \int_{\R^d} \E^y\left[|K_{B_R}(x-X^0_t)|U^{b_\theta}_t\right]|w(y)|\rd y\\
        \leq &\xi L \lVert b-\tb\rVert_\infty t \int_{\R^d} \int_{B_R} \frac{\alpha}{|u|^\gamma}p_{b_\theta}(y,t,x-u)\rd u |w(y)|\rd y\\
        \leq & \xi L \lVert b-\tb\rVert_\infty t \frac{2\lVert w\rVert_\infty \pi^d}{\Gamma(\frac{d}{2})}C^{1+\frac{d}{2}}\alpha \left(1+ \lVert b_\theta\rVert_\infty \sqrt{t}e^{\frac{\xi(p-1)}{2}\lVert b_\theta \rVert_\infty^2 t}\right)\frac{R^{d-\gamma}}{d-\gamma},
    \end{aligned}
\end{equation*}
and under the assumption that $t\leq T_L$, $\lVert b_\theta\rVert_\infty \sqrt{t}\leq L\sqrt{t}\leq 1$, so it follows that
\begin{equation*}
    I_3 \leq \frac{2\lVert w\rVert_\infty \pi^d}{\Gamma(\frac{d}{2})}C^{1+\frac{d}{2}}\alpha \left(1+ e^{\frac{\xi(p-1)}{2}}\right)\frac{R^{d-\gamma}}{d-\gamma} \xi \lVert b-\tb\rVert_\infty Lt.  
\end{equation*}
Finally, for $I_4$, we have that when $t\leq T_L$,
\begin{equation*}
    \begin{aligned}
        I_4 \leq  & \xi L \lVert b-\tb\rVert_\infty t \int_{\R^d} \int_{B_R^C} \frac{\alpha}{|u|^\gamma} p_{b_\theta}(y,t,x-u)\rd u|w(y)|\rd y \\
        \leq & \xi L  \lVert b-\tb\rVert_\infty t \frac{\alpha}{R^\gamma}\lVert w\rVert_1\\
        \leq & \xi \lVert b-\tb\rVert_\infty Lt \frac{\alpha}{R^\gamma}\lVert w\rVert_1.
    \end{aligned}
\end{equation*}
Therefore, combining the above results, we conclude that 
\begin{equation*}
    \begin{aligned}
        I_3+I_4 \leq & \left(\frac{2\alpha C^{1+\frac{d}{2}}\pi^d R^{d-\gamma}\lVert w\rVert_\infty }{\Gamma(\frac{d}{2})(d-\gamma)} \left(1+ e^{\frac{\xi(p-1)}{2}}\right) + \frac{\alpha \lVert w\rVert_1}{R^\gamma}\right) \xi \lVert b-\tb\rVert_\infty Lt\\
        =& C_0 \xi \lVert b-\tb\rVert_\infty Lt\\
        \leq & C_0 \xi \lVert b-\tb\rVert_\infty \sqrt{t}. 
    \end{aligned}
\end{equation*}
As a consequence, 
\begin{equation*}
\begin{aligned}
    \left|\K(b)(t,x)-\K(\tb)(t,x)\right|\leq C_0(\xi+\sqrt{\xi})\sqrt{t}\lVert b-\tb\rVert_\infty.
\end{aligned}
\end{equation*}
When we take $t\leq \tau<(1\wedge \frac{1}{\xi+\sqrt{\xi}})T_L$,
\begin{equation*}
    \lVert \K(b) -\K(\tb)\rVert_{L^\infty([0,\tau]\times \R^d)}\leq C_0(\xi+\sqrt{\xi})\sqrt{\tau}\lVert b-\tb\rVert_\infty, 
\end{equation*}
where $C_0(\xi+\sqrt{\xi})\sqrt{\tau}<1$, and thus there exists a unique fixed point $b\in \mathcal{B}_L([0,\tau]\times \R^d)$ such that $\K(b)=b$. The result follows from the classical result on the weak solutions to the SDE \eqref{SDE-b-drift}.
\end{proof}

\bibliographystyle{abbrv}
\nocite*
\bibliography{refs}
\end{document}